\newcounter{todocounter}
\DeclareDocumentCommand\addreference{g}{\stepcounter{todocounter}\todo[color = blue!30]{\thetodocounter. Add reference\IfNoValueF{#1}{: #1}}\xspace}
\DeclareDocumentCommand\checkthis{g}{\stepcounter{todocounter}\todo[color = red!50]{\thetodocounter. Check this\IfNoValueF{#1}{: #1}}\xspace}
\DeclareDocumentCommand\fixthis{g}{\stepcounter{todocounter}\todo[color = orange!50]{\thetodocounter. Fix this\IfNoValueF{#1}{: #1}}\xspace}
\DeclareDocumentCommand\expand{g}{\stepcounter{todocounter}\todo[color = green!50]{\thetodocounter. Expand\IfNoValueF{#1}{: #1}}\xspace}
\declaretheoremstyle[
  spaceabove = 3pt,
  spacebelow = 3pt,
]{lecture}
\theoremstyle{lecture}
\newtheorem{theorem}{Theorem}
\newtheorem{corollary}[theorem]{Corollary}
\newtheorem{definition}[theorem]{Definition}
\newtheorem{lemma}[theorem]{Lemma}
\newtheorem{proposition}[theorem]{Proposition}
\newtheorem{remark}[theorem]{Remark}
\crefname{figure}{figure}{figures}
\def\gitfootnote{\gdef\@thefnmark{}\@footnotetext}
\mathchardef\mhyphen="2D
\newcommand\dash{\nobreakdash-\hspace{0pt}}
\let\oldbigwedge\bigwedge
\renewcommand\bigwedge{\oldbigwedge\nolimits}
\newcommand\bounded{\ensuremath{\mathrm{b}}}
\newcommand\moduli{\ensuremath{\mathrm{M}}}
\newcommand\RR{\ensuremath{\mathrm{R}}}
\newcommand\RRR{\ensuremath{\mathbf{R}}}
\newcommand\tangent{\ensuremath{\mathrm{T}}}
\newcommand\todd{\ensuremath{\mathrm{td}}}
\DeclareMathOperator\adjoint{ad}
\DeclareMathOperator\cc{c}
\DeclareMathOperator\chern{ch}
\DeclareMathOperator\codim{codim}
\DeclareMathOperator\derived{\mathbf{D}}
\DeclareMathOperator\Ext{Ext}
\DeclareMathOperator\HH{H}
\DeclareMathOperator\hh{h}
\DeclareMathOperator\HP{HP}
\DeclareMathOperator\Hom{Hom}
\DeclareMathOperator\identity{id}
\DeclareMathOperator\Pic{Pic}
\DeclareMathOperator\sheafEnd{\mathcal{E}nd}
\DeclareMathOperator\Spec{Spec}
\DeclareMathOperator\Sym{Sym}
\title{Admissible subcategories in derived categories of moduli of vector bundles on curves}
\author{Pieter Belmans \\ Swarnava Mukhopadhyay}
\begin{document}
\maketitle


\begin{abstract}
  We show that the Poincar\'e bundle gives a fully faithful embedding from the derived category of a curve of sufficiently high genus into the derived category of its moduli space of bundles of rank~$r$ with fixed determinant of degree~1. Moreover we show that a twist of the embedding, together with 2 exceptional line bundles, gives the start of a semi-orthogonal decomposition. This generalises results of Narasimhan and Fonarev--Kuznetsov, who embedded the derived category of a single copy of the curve, for rank~2.
\end{abstract}


\section{Introduction}
\label{section:introduction}
Let~$C$ be a smooth projective curve of genus~$g\geq 2$ and let~$r\geq 2$ and~$d$ be integers such that~$\gcd(r,d)=1$. Then the moduli space~$\moduli_C(r,\mathcal{L})$ of stable vector bundles of rank~$r$ and fixed determinant~$\mathcal{L}$ of degree~$d$ is a smooth and projective Fano variety, such that~$\Pic\moduli_C(r,\mathcal{L})\cong\mathbb{Z}\Theta$. Moreover there exists a universal vector bundle~$\mathcal{W}$ (called the \emph{Poincar\'e bundle}) on~$C\times\moduli_C(r,\mathcal{L})$. We can use~$\mathcal{W}$ to construct a Fourier--Mukai functor~$\Phi_{\mathcal{W}}$ from the derived category of the curve to the derived category of the moduli of vector bundles.

In \cite{MR3713871} Narasimhan showed that the Fourier--Mukai functor~$\Phi_{\mathcal{W}}$ is \emph{fully faithful}, if~$r=2$, $d=1$, $g\geq 4$ and~$\mathcal{W}$ is suitably normalized (see equation~\eqref{equation:normalisation}). His proof uses the Hecke correspondence to check the Bondal--Orlov criterion. Independently, in \cite{MR3764066} Fonarev--Kuznetsov showed the fully faithfulness for~$r=2$, $d=1$ and~$g\geq 2$ when~$C$ is a generic curve. Their proof involves an explicit model by Desale--Ramanan \cite{MR0429897} of~$\moduli_C(2,\mathcal{L})$ when~$C$ is hyperelliptic, and checking the Bondal--Orlov criterion in this special case using the Borel--Weil--Bott theorem.

The first result in this article is a generalisation of these fully faithfulness results: the rank~$r$ is now arbitrary, the degree~$d$ is~1, and~$g\geq g_0$, where~$g_0$ is the smallest integer such that
\begin{equation}
  \label{equation:genus-condition}
  2rg_0-2(r+g_0)\geq r^2-1.
\end{equation}
In particular when~$r=2$, we get that~$g_0$ is~$4$. The vanishing results in \cref{theorem:fully-faithful-vanishing} together with the Bondal--Orlov criterion then imply the following.
\renewcommand\thetheorem{\Alph{theorem}}
\begin{theorem}
  \label{theorem:fully-faithful}
	The Fourier--Mukai transform~$\Phi_{\mathcal{W}}$ gives a fully faithful embedding
  \begin{equation}
    \derived^\bounded(C) \rightarrow \derived^\bounded(\moduli_C(r,\mathcal{L}))
  \end{equation}
  for any smooth curve~$C$ of genus~$g\geq g_0$.
\end{theorem}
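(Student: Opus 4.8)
The plan is to deduce full faithfulness from the Bondal--Orlov (and Bridgeland) criterion for Fourier--Mukai functors, which reduces the question to the \emph{point-like} behaviour of the kernel. Since~$\moduli_C(r,\mathcal{L})$ is smooth and projective and we work over a field of characteristic~$0$, the transform~$\Phi_{\mathcal{W}}$ is fully faithful if and only if for every pair of closed points~$x_1,x_2\in C$ one has
\begin{equation}
  \Hom_{\derived^\bounded(\moduli_C(r,\mathcal{L}))}\bigl(\Phi_{\mathcal{W}}(\mathcal{O}_{x_1}),\Phi_{\mathcal{W}}(\mathcal{O}_{x_2})[i]\bigr)
  =
  \begin{cases}
    \mathbb{C} & \text{if } x_1=x_2 \text{ and } i=0,\\
    0 & \text{if } x_1\neq x_2,\ \text{or } i<0,\ \text{or } i>\dim C=1.
  \end{cases}
\end{equation}
Note that on the diagonal the criterion imposes no condition for~$0<i\leq 1$, so the value of the first self-extension group will be irrelevant.

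First I would identify the images of the skyscraper sheaves. Since~$\mathcal{W}$ is a vector bundle on~$C\times\moduli_C(r,\mathcal{L})$, and in particular flat over~$C$, the transform~$\Phi_{\mathcal{W}}$---pull back along the projection to~$C$, tensor with~$\mathcal{W}$, push forward to~$\moduli_C(r,\mathcal{L})$---sends~$\mathcal{O}_x$ to the honest restriction~$\mathcal{W}_x\coloneqq\mathcal{W}|_{\{x\}\times\moduli_C(r,\mathcal{L})}$, a rank~$r$ vector bundle placed in cohomological degree~$0$. The morphism spaces in the criterion thus become the Ext-groups~$\Ext^i_{\moduli_C(r,\mathcal{L})}(\mathcal{W}_{x_1},\mathcal{W}_{x_2})$, which vanish automatically for~$i<0$ as we are comparing sheaves.

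Unwinding the criterion, what remains to verify is exactly: (i)~\emph{simplicity} on the diagonal,~$\Hom(\mathcal{W}_x,\mathcal{W}_x)=\mathbb{C}$; (ii)~\emph{off-diagonal orthogonality},~$\Ext^i(\mathcal{W}_{x_1},\mathcal{W}_{x_2})=0$ for all~$i$ whenever~$x_1\neq x_2$; and (iii)~\emph{higher diagonal vanishing},~$\Ext^i(\mathcal{W}_x,\mathcal{W}_x)=0$ for~$i\geq 2$. These three statements are precisely the content of~\cref{theorem:fully-faithful-vanishing}, so invoking it concludes the proof. I would also use Serre duality on the~$(r^2-1)(g-1)$-dimensional Fano variety~$\moduli_C(r,\mathcal{L})$ to reduce the range of indices that genuinely needs checking, trading top-degree extension groups for low-degree ones.

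The main obstacle is therefore not this reduction, which is formal, but the vanishing theorem on which it rests. That is where the genus hypothesis~\eqref{equation:genus-condition} has to enter: one expects the off-diagonal and higher self-extension groups to be governed by the cohomology of natural bundles built from~$\mathcal{W}$---restrictions and twists of~$\sheafEnd(\mathcal{W})$ along~$C$ and on the moduli space---and the inequality~$2rg-2(r+g)\geq r^2-1$ should be exactly the numerical threshold making the relevant positivity, hence the vanishing, kick in. Proving those cohomology vanishing statements, rather than the categorical deduction sketched here, is the real work.
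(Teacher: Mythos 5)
Your proposal is correct and takes essentially the same route as the paper: the paper's proof of \cref{theorem:fully-faithful} consists precisely of applying the Bondal--Orlov criterion (\cref{proposition:bondal-orlov}) using the vanishing statements of \cref{theorem:fully-faithful-vanishing}, whose three parts match the three conditions (i)--(iii) you identify after observing that~$\Phi_{\mathcal{W}}(k(x))\cong\mathcal{W}_x$. As you rightly say, the genuine work is in proving \cref{theorem:fully-faithful-vanishing}, not in this formal reduction.
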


In \cite[remark~4]{MR3713871} Narasimhan explains that the results in op.~cit.~also show that
\begin{equation}
  \label{equation:start-of-sod}
  \Theta^\vee,\mathcal{O}_{\moduli_C(2,\mathcal{L})},\Phi_{\mathcal{W}}(\derived^\bounded(C))
\end{equation}
is the start of a semiorthogonal decomposition of~$\derived^\bounded(\moduli_C(r,\mathcal{L}))$. The second result in this article is a generalisation and more interestingly an extension of this result to higher rank: we exhibit a \emph{second} copy of the derived category of the curve. To do this we twist the functor~$\Phi_{\mathcal{W}}$ by~$\Theta^\vee$, or equivalently we consider the Fourier--Mukai functor associated to~$\mathcal{W}\otimes p_2^*(\Theta^\vee)$.

\begin{theorem}
	\label{theorem:sod}
  Let~$C$ be any smooth curve of genus~$g\geq g_0$, then there exists a semiorthogonal decomoposition of the form
  \begin{equation}
    \label{equation:sod}
    \derived^{\bounded}(\moduli_C(r,\mathcal{L}))
    =
    \left\langle
      \Theta^\vee,
      \Phi_{\mathcal{W}}(\derived^{\bounded}(C))\otimes\Theta^\vee,
      \mathcal{O}_{\moduli_C(r,\mathcal{L})},
      \Phi_{\mathcal{W}}(\derived^{\bounded}(C)),
      \mathcal{A}
    \right\rangle,
  \end{equation}
	where~$\mathcal{A}$ is the left-orthogonal complement to the admissible subcategory generated by the~2~exceptional objects and the~2~copies of~$\derived^\bounded(C)$.
\end{theorem}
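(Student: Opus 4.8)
The plan is to establish Theorem B (the semiorthogonal decomposition) by verifying the three structural facts that make a collection of subcategories into a semiorthogonal decomposition: that each generating piece is admissible, that the semiorthogonality (Hom-vanishing in the required direction) holds between every pair of blocks, and that $\mathcal{A}$ is simply defined to absorb the remaining orthogonal complement. Since $\mathcal{A}$ is defined as the left-orthogonal complement of the subcategory generated by the four listed blocks, the content of the theorem is really: (i) the four blocks $\Theta^\vee$, $\Phi_{\mathcal{W}}(\derived^\bounded(C))\otimes\Theta^\vee$, $\mathcal{O}$, $\Phi_{\mathcal{W}}(\derived^\bounded(C))$ generate an admissible subcategory, and (ii) they are listed in a valid semiorthogonal order. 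I would first record that $\Theta^\vee$ and $\mathcal{O}$ are exceptional line bundles (automatic on a Fano variety with $\Pic\cong\mathbb{Z}\Theta$), and that each copy of $\derived^\bounded(C)$ is admissible because $\Phi_{\mathcal{W}}$ is fully faithful by Theorem A and a twist by a line bundle is an equivalence, so its essential image is admissible provided the source has a Serre functor — which $\derived^\bounded(C)$ does, $C$ being smooth projective.

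With admissibility in hand, the heart of the argument is the semiorthogonality, which I would reduce to a list of cohomology-vanishing statements of the form $\Hom^\bullet(E_j, E_i)=0$ for $i<j$, where $E_1,\dots,E_4$ are the four blocks in the order written. Concretely the conditions split into: vanishing between the two exceptional objects and the two Fourier--Mukai images, and vanishing between the two Fourier--Mukai images themselves. I would express each such $\Hom$-complex as the cohomology of an object on $\moduli_C(r,\mathcal{L})$ obtained by applying the Fourier--Mukai kernel, so that the vanishing becomes a statement about the cohomology of certain twists of $\mathcal{W}$, $\mathcal{W}^\vee$, and tensor products like $\mathcal{W}\otimes\mathcal{W}^\vee$, pushed forward from $C\times\moduli_C(r,\mathcal{L})$ and tensored by powers of $\Theta$. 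I expect most of these reduce to the same family of vanishing results already assembled for Theorem A (referenced as the vanishing theorem \cref{theorem:fully-faithful-vanishing}), possibly after twisting by $\Theta^{\pm1}$; the genus bound \eqref{equation:genus-condition} should be exactly what is needed to clear the new twisted cases, just as it clears the untwisted ones.

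The main obstacle I anticipate is the \emph{cross-term} semiorthogonality between the two copies of $\derived^\bounded(C)$, namely showing $\Hom^\bullet(\Phi_{\mathcal{W}}(b), \Phi_{\mathcal{W}}(a)\otimes\Theta^\vee)=0$ for all $a,b\in\derived^\bounded(C)$, since this does not follow formally from full faithfulness of a single copy and genuinely involves the interaction of the Poincaré bundle with the theta twist. I would attack this by computing the composite kernel: the relevant $\Hom$ is governed by $R\Gamma$ of $p_{13,*}(\mathcal{W}_{12}^\vee\otimes (\mathcal{W}\otimes p_2^*\Theta^\vee)_{13})$ type convolutions over $C\times\moduli\times C$, which after push-forward reduces to understanding the cohomology of $\mathcal{W}\otimes\mathcal{W}^\vee\otimes\Theta^\vee$ along the fibres. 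Establishing the vanishing here, uniformly for all genus $g\geq g_0$, is where the delicate part of the genus estimate enters, and I would isolate this as the key lemma, deriving the exceptional-versus-Fourier--Mukai vanishings (which involve only single copies of $\mathcal{W}$ and are closer to the Theorem A computations) as comparatively routine corollaries of the same cohomological toolkit.
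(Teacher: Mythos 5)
Your architecture matches the paper's proof: admissibility of the four blocks comes from Theorem~A (a fully faithful Fourier--Mukai functor between smooth projective varieties has both adjoints, so its image is admissible), and semiorthogonality is reduced to cohomology vanishing tested on skyscrapers --- the paper packages this reduction as \cref{lemma:orthogonal-via-spanning-classes}, and your convolution-of-kernels formulation is an equivalent way of making the same reduction. The list of vanishing statements you arrive at is also the correct one: it is precisely \cref{theorem:sod-vanishing}, with your cross-term $\HH^\bullet(\moduli_C(r,\mathcal{L}),\mathcal{W}_x\otimes\mathcal{W}_y^\vee\otimes\Theta^\vee)=0$ as part~(3) and the single-$\mathcal{W}$ statements $\HH^\bullet(\mathcal{W}_x^\vee)$, $\HH^\bullet(\mathcal{W}_x^\vee\otimes\Theta^\vee)$, $\HH^\bullet(\mathcal{W}_x\otimes\Theta^\vee)$ as parts~(1), (2), (2'); the remaining checks are Kodaira vanishing.

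The genuine gap is that your ``key lemma'' is only named, never proved, and the method you point to --- ``the same cohomological toolkit'' as Theorem~A --- in fact fails for exactly this statement. The Theorem~A machinery handles low degrees by pushing down along the Hecke fibration $\psi$ and using that $\pi^*(\mathcal{W}_y^\vee)$ restricts to $\mathcal{O}_{\mathbb{P}^{r-1}}(-1)^{\oplus r}$ on the fibres, a bundle with no cohomology in any degree. After twisting by $\Theta^\vee$ this collapses: by the computation in \cref{proposition:restriction-to-fibre} one has $\pi^*(\Theta)|_{\psi^{-1}(m)}\cong\mathcal{O}_{\mathbb{P}^{r-1}}(r)$, so the fibres of $\pi^*(\mathcal{W}_y^\vee\otimes\Theta^\vee)$ are $\mathcal{O}_{\mathbb{P}^{r-1}}(-r-1)^{\oplus r}$, whose top cohomology $\HH^{r-1}(\mathbb{P}^{r-1},\mathcal{O}(-r-1))\cong\HH^0(\mathbb{P}^{r-1},\mathcal{O}(1))^\vee$ is non-zero; hence $\RR^{r-1}\psi_*\neq0$ and the Leray argument gives nothing (while Le Potier vanishing only ever covers degrees $\geq r^2$, never all degrees). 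The paper needs a genuinely new ingredient here: $\mathcal{W}_x$ is $(r-1)$\dash ample (\cref{proposition:k-ample}), Sommese vanishing then kills $\HH^i$ for $i\geq r^2+r-1$, and a Serre duality trick exchanging the roles of $x$ and $y$ (using $\omega_{\moduli_C(r,\mathcal{L})}\cong\Theta^{\otimes-2}$) kills the complementary range, the two ranges covering all degrees once $g\geq4$. Note also that your guess that the bound \eqref{equation:genus-condition} is what drives the cross-term is off: that bound is what Theorem~A needs, the single-$\mathcal{W}$ twisted vanishings need only the weaker bound with $\binom{r}{j}$ in place of $r^2$, and the cross-term needs only $g\geq4$. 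Without supplying the Sommese/Serre-duality argument (or citing part~(3) of \cref{theorem:sod-vanishing} as an external input), the proposal does not yield the theorem.
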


\setcounter{theorem}{0}
\renewcommand\thetheorem{\arabic{theorem}}

\begin{remark}
  This result is already new in the case of~$r=2$. In a work in progress joint with Sergey Galkin \cite{belmans-galkin-mukhopadhyay} we are studying~$\derived^\bounded(\moduli_C(2,\mathcal{L}))$ from the point of view of mirror symmetry for Fano varieties:
  \begin{enumerate}
    \item quantum cohomology can be used to give expectations on natural semiorthogonal decompositions;
    \item there are various conjectures \cite{MR3536989} regarding the eigenvalues of the quantum multiplication~$\mathrm{c}_1(X)*-$ that can be checked for~$\moduli_C(2,\mathcal{L})$.
  \end{enumerate}
  The suggested decomposition into indecomposable pieces (recall that~$\derived^\bounded(C)$ is indecomposable by \cite{MR2838062}) will involve symmetric powers~$\Sym^i C$ for~$i\leq g-1$. For~$r\geq 3$ the picture becomes more complicated, and it is currently unclear what the systematic description could be.
\end{remark}

\paragraph{The Bondal--Orlov criterion}
To check fully faithfulness of the Fourier--Mukai functor~$\Phi_{\mathcal{W}}$, we will use following criterion due to Bondal--Orlov \cite[theorem~1.1]{alg-geom/9506012}. We denote the skyscraper at a point~$x$ by~$k(x)$.

\begin{proposition}[Bondal--Orlov]
  \label{proposition:bondal-orlov}
  Let~$X$ and~$Y$ be smooth projective varieties. Let~$\mathcal{E}$ be an object in~$\derived^\bounded(X\times Y)$. Then~$\Phi_{\mathcal{E}}$ is fully faithful if and only if
  \begin{enumerate}
    \item\label{enumerate:bondal-orlov-1} $\Hom_Y(\Phi_{\mathcal{E}}(k(x)),\Phi_{\mathcal{E}}(k(x)))\cong k$ for all~$x\in X$;
    \item\label{enumerate:bondal-orlov-2} $\Hom_Y(\Phi_{\mathcal{E}}(k(x)),\Phi_{\mathcal{E}}(k(x))[i])\cong0$ for all~$x\in X$ and~$i\notin[0,\dim X]$;
    \item\label{enumerate:bondal-orlov-3} $\Hom_Y(\Phi_{\mathcal{E}}(k(x)),\Phi_{\mathcal{E}}(k(y))[i])\cong0$ for all~$x,y\in X$ such that~$x\neq y$ and~$i\in\mathbb{Z}$.
  \end{enumerate}
\end{proposition}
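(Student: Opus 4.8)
The plan is to deduce the criterion from the general spanning-class characterisation of fully faithful functors, whose hypotheses the three conditions are designed to verify on the class of skyscrapers. I would first dispose of the ``only if'' direction: if~$\Phi_{\mathcal E}$ is fully faithful then $\Hom_Y(\Phi_{\mathcal E}(k(x)),\Phi_{\mathcal E}(k(y))[i]) \cong \Hom_X(k(x),k(y)[i])$, and the right-hand side is computed locally as $\bigwedge^i \tangent_x X$ when $x=y$ (so it equals $k$ in degree~$0$ and vanishes outside~$[0,\dim X]$) and as~$0$ when $x\neq y$, which is exactly \cref{enumerate:bondal-orlov-1,enumerate:bondal-orlov-2,enumerate:bondal-orlov-3}. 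The substance is the converse.

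For the ``if'' direction I would use two standard inputs: first, that the skyscrapers $\{k(x)\}_{x\in X}$ form a spanning class of~$\derived^\bounded(X)$ (an object right- or left-orthogonal to all of them is supported nowhere, hence zero); second, that a Fourier--Mukai functor between smooth projective varieties admits both a left and a right adjoint, again of Fourier--Mukai type via Grothendieck--Verdier duality. The general criterion then says that $\Phi_{\mathcal E}$ is fully faithful as soon as the natural maps $\Hom_X(k(x),k(y)[i]) \to \Hom_Y(\Phi_{\mathcal E}(k(x)),\Phi_{\mathcal E}(k(y))[i])$ are bijective for all $x,y\in X$ and all $i\in\mathbb Z$. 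The case $x\neq y$ is immediate: the source vanishes because the skyscrapers have disjoint support, and the target vanishes by \cref{enumerate:bondal-orlov-3}.

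This leaves the diagonal case $x=y$, which is where I expect the real work to be. Writing $\Phi^{\mathrm R}$ for the right adjoint and $Q_x \coloneqq \Phi^{\mathrm R}\Phi_{\mathcal E}(k(x))$, the adjunction rewrites the target as $\Hom_X(k(y),Q_x[i])$, and \cref{enumerate:bondal-orlov-3} forces $Q_x$ to be supported set-theoretically at~$x$, while \cref{enumerate:bondal-orlov-2} bounds its cohomological amplitude, since the top and bottom nonvanishing cohomology sheaves of an object supported at a point contribute nonzero $\Hom_X(k(x),-[i])$ in the extreme degrees. One reduces in this way to understanding the graded-ring map $\bigwedge^\bullet \tangent_x X = \Hom_X^\bullet(k(x),k(x)) \to \Hom_Y^\bullet(\Phi_{\mathcal E}(k(x)),\Phi_{\mathcal E}(k(x)))$ induced by composition, which is an isomorphism in degree~$0$ by \cref{enumerate:bondal-orlov-1} and whose target vanishes outside~$[0,\dim X]$ by \cref{enumerate:bondal-orlov-2}.

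The hard part is to promote this to an isomorphism in every intermediate degree $0<i<\dim X$, since the three conditions only constrain single Hom-groups rather than the comparison map itself. Here I would exploit the multiplicative structure together with Serre duality: the composition pairing $\bigwedge^i\tangent_x X \times \bigwedge^{\dim X - i}\tangent_x X \to \bigwedge^{\dim X}\tangent_x X\cong k$ is perfect, the comparison map respects it, and once one knows the map is injective (equivalently nonzero) in top degree the perfect pairing propagates injectivity to all degrees; surjectivity then follows from matching dimensions, for which the family structure of the Fourier--Mukai image --- the objects $\Phi_{\mathcal E}(k(x))$ vary in a family over~$X$ --- is essential and is what rules out pathological non-reduced behaviour such as $Q_x$ being a thickening of~$k(x)$. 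Controlling this top-degree generator, and thereby the intermediate groups, is the crux of the argument; granting it, bijectivity holds on the whole spanning class and the general criterion yields full faithfulness.
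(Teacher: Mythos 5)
The paper offers no proof of this proposition: it is quoted directly from Bondal--Orlov \cite[theorem~1.1]{alg-geom/9506012}, so there is no internal argument to compare yours against, and your proposal must be judged on its own terms. Its formal layer is correct and standard: the ``only if'' direction via $\Ext^i_X(k(x),k(y))$, skyscrapers as a spanning class, the existence of both adjoints of Fourier--Mukai type, the reduction to the diagonal case, and the observation that conditions (\ref{enumerate:bondal-orlov-2}) and (\ref{enumerate:bondal-orlov-3}) force $Q_x$ to be a finite-length sheaf supported at $x$ (modulo a harmless slip: with the right adjoint the target becomes $\Hom_X(k(x),Q_y[i])$, not $\Hom_X(k(y),Q_x[i])$).

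The gap is the step you yourself defer with ``granting it'', and it is not a technicality --- it is the entire mathematical content of the theorem; everything before it is formal. Conditions (\ref{enumerate:bondal-orlov-1})--(\ref{enumerate:bondal-orlov-3}) constrain individual Hom-groups, not the comparison map, and pointwise they provably cannot force $Q_x\cong k(x)$: if $Q_x\cong\mathcal{O}_Z$ for $Z$ a fat point supported at $x$, then $\Hom(Q_x,k(x))\cong k$ (the module is cyclic), $\Ext^i(Q_x,k(y))=0$ for all $y\neq x$, and $\Ext^i(Q_x,k(x))=0$ for $i\notin[0,\dim X]$ automatically, since the regular local ring $\mathcal{O}_{X,x}$ has global dimension $\dim X$. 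So an object satisfying every constraint extractable from the three conditions by adjunction need not be a skyscraper. This breaks both mechanisms you propose: ``surjectivity follows from matching dimensions'' is unavailable because the hypotheses do not determine the dimensions of the intermediate $\Ext$-groups, and injectivity in top degree is exactly what fails for a fat point --- the socle of $\mathcal{O}_Z$ lies in the kernel of the counit $\mathcal{O}_Z\to k(x)$, so by Serre duality the comparison map in degree $\dim X$ is zero. The genuine proof (Bondal--Orlov; see also Bridgeland's account, or proposition~7.1 of Huybrechts' book on Fourier--Mukai transforms) must globalize: one writes $\Phi^{\mathrm{L}}_{\mathcal{E}}\circ\Phi_{\mathcal{E}}$ as a Fourier--Mukai functor with kernel $\mathcal{K}\in\derived^\bounded(X\times X)$, uses the pointwise conditions to show $\mathcal{K}$ is a sheaf, flat over the first factor, supported on the diagonal and fibrewise cyclic, hence $\mathcal{K}\cong\mathcal{O}_Z$ for a thickening $Z$ of the diagonal, finite and flat over $X$, and then proves $Z=\Delta$ using smoothness and connectedness of $X$ together with the counit --- an argument about the whole family over $X$, not about one point at a time. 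Your sketch names the right ingredient (``the family structure is essential'') but contains no argument for it, so what you have is a correct setup surrounding an unproven core, not a proof.
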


\paragraph{Vanishing results}
To apply the Bondal--Orlov criterion in the proof of \cref{theorem:fully-faithful} we need the following generalisation of \cite[parts (1), (2) and (3) of theorem~1.2]{MR3713871}. For any point~$z\in C$, we denote by~$\mathcal{W}_{z}$ the restriction of~$\mathcal{W}$ to~$\{z\}\times C$.
\begin{theorem}
  \label{theorem:fully-faithful-vanishing}
  Let~$C$ be a smooth projective curve of genus~$g\geq g_0$, and~$\mathcal{L}$ a line bundle of degree~$1$. Let~$\mathcal{W}$ be the normalised Poincar\'e bundle on~$C\times\moduli_C(r,\mathcal{L})$. Then
  \begin{enumerate}
    \item\label{enumerate:fully-faithful-vanishing-1} $\HH^0(\moduli_C(r,\mathcal{L}),\mathcal{W}_x\otimes\mathcal{W}_x^\vee)\cong k$ for all~$x\in C$;
    \item\label{enumerate:fully-faithful-vanishing-2} $\HH^i(\moduli_C(r,\mathcal{L}),\mathcal{W}_x\otimes\mathcal{W}_x^\vee)\cong 0$ for all~$x\in C$ and~$i\geq 2$;
    \item\label{enumerate:fully-faithful-vanishing-3} $\HH^i(\moduli_C(r,\mathcal{L}),\mathcal{W}_x\otimes\mathcal{W}_y^\vee)\cong 0$ for all~$x,y\in C$ such that~$x\neq y$ and~$i\geq 0$.
  \end{enumerate}
\end{theorem}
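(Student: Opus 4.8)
The plan is to reduce all three statements to cohomology computations on the Hecke correspondence, generalising Narasimhan's rank-two strategy. For a point $z\in C$ set $\moduli':=\moduli_C(r,\mathcal{L}(\pm z))$; since $\gcd(r,1\pm r)=1$ this is again a smooth projective moduli space carrying a normalised Poincaré bundle~$\mathcal{V}$. Consider the projective bundle $\pi\colon\mathbb{P}(\mathcal{W}_z^\vee)\to\moduli_C(r,\mathcal{L})$: its points are one-dimensional quotients of~$\mathcal{W}_z^\vee$, i.e.\ Hecke modifications of the universal bundle at~$z$, and taking kernels produces a second morphism $q\colon\mathbb{P}(\mathcal{W}_z^\vee)\to\moduli'$ that exhibits the same variety as a $\mathbb{P}^{r-1}$-bundle over~$\moduli'$. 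First I would record the comparison of the two rulings: the relations expressing $\mathcal{O}(1)$, $\pi^*\Theta$ and $q^*\Theta'$ in terms of one another, together with the key observation that for $w\neq z$ the two universal bundles agree after pullback, $\pi^*\mathcal{W}_w\cong q^*(\mathcal{V}_w\otimes N)$ for some line bundle~$N$ on~$\moduli'$, because a Hecke modification at~$z$ is an isomorphism on the fibres over~$w$.

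Using $\pi_*\mathcal{O}(1)\cong\mathcal{W}_z^\vee$ and $R^{>0}\pi_*\mathcal{O}(1)=0$, the projection formula turns each group $\HH^\bullet(\moduli_C(r,\mathcal{L}),\mathcal{W}_x\otimes\mathcal{W}_y^\vee)=\Ext^\bullet(\mathcal{W}_y,\mathcal{W}_x)$ into the cohomology of an explicit bundle on a Hecke correspondence. For the off-diagonal case~\ref{enumerate:fully-faithful-vanishing-3} I take the correspondence at~$y$, so that $\HH^\bullet(\moduli_C(r,\mathcal{L}),\mathcal{W}_x\otimes\mathcal{W}_y^\vee)\cong\HH^\bullet(\mathbb{P}(\mathcal{W}_y^\vee),\pi^*\mathcal{W}_x\otimes\mathcal{O}(1))$; since $x\neq y$ the factor $\pi^*\mathcal{W}_x$ descends along~$q$, and pushing forward reduces everything to the cohomology on~$\moduli'$ of $\mathcal{V}_x\otimes N\otimes Rq_*\mathcal{O}(1)$, which I want to be acyclic in every degree. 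For the diagonal case~\ref{enumerate:fully-faithful-vanishing-1}--\ref{enumerate:fully-faithful-vanishing-2} I instead use the tautological sequence $0\to\mathcal{O}\to\pi^*\mathcal{W}_x\otimes\mathcal{O}(1)\to\mathcal{Q}\otimes\mathcal{O}(1)\to0$ on $\mathbb{P}(\mathcal{W}_x^\vee)$, where $\mathcal{Q}$ is the rank-$(r-1)$ dual tautological bundle. The sub-line-bundle contributes exactly $\HH^\bullet(\moduli_C(r,\mathcal{L}),\mathcal{O})$, which is~$k$ in degree~$0$ and vanishes in higher degrees since the moduli space is Fano; the long exact sequence then shows that~\ref{enumerate:fully-faithful-vanishing-1} and~\ref{enumerate:fully-faithful-vanishing-2} both follow once one proves that $\HH^\bullet(\mathbb{P}(\mathcal{W}_x^\vee),\mathcal{Q}\otimes\mathcal{O}(1))$ is concentrated in cohomological degree~$1$.

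The heart of the argument is thus a vanishing statement for the explicit bundles produced above. I would establish it by using the second ruling to rewrite these bundles as twists of powers of~$\Theta$ (respectively~$\Theta'$) and then invoking Kodaira-type vanishing, using that $\moduli_C(r,\mathcal{L})$ and~$\moduli'$ are Fano with Picard group~$\mathbb{Z}\Theta$. The numerical hypothesis~\eqref{equation:genus-condition}, equivalently $(r-1)(2g-r-3)\geq2$, is precisely the threshold that makes the relevant line bundle positive enough for the vanishing to hold in the prescribed range of degrees; equivalently it places the slope of the bundle governing the fibrewise computation in the acyclic range on~$C$. For $r=2$ this recovers Narasimhan's bound $g\geq4$ from~\cite{MR3713871}.

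The main obstacle, and where higher rank genuinely departs from the rank-two picture, is twofold. First, a Hecke modification of a stable bundle need not remain stable, so $q$ is a priori only defined over an open locus and $\mathbb{P}(\mathcal{W}_z^\vee)$ is not literally a projective bundle over~$\moduli'$; I expect the genus hypothesis to force the complementary locus to have codimension at least~$2$, so that it does not affect any of the cohomological computations. Second, whereas in rank~$2$ one manipulates line bundles on a $\mathbb{P}^1$-bundle, here one must carry the higher tautological bundle~$\mathcal{Q}$ and the direct images $Rq_*\mathcal{O}(1)$ through the argument and compare the two polarisations precisely. Obtaining the positivity threshold sharply, so that $g_0$ comes out exactly as in~\eqref{equation:genus-condition} rather than something weaker, is the crux of the computation.
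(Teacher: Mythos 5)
Your overall strategy for part~(\ref{enumerate:fully-faithful-vanishing-3}) --- Hecke correspondence, projection formula along the two rulings, restriction to the locus where the second ruling exists --- is indeed the paper's strategy, but two load-bearing steps fail as you state them. First, the target of the second map is not what you claim: a Hecke modification at a single point changes the degree by~$1$, not by~$r$, so the modified bundles have degree~$0$ (the paper's target is $\moduli_C(r,\mathcal{L}^\vee\otimes\mathcal{O}_C(x))$), and $\gcd(r,0)=r$. Hence $\moduli'$ is \emph{not} a fine moduli space: there is no Poincar\'e bundle~$\mathcal{V}$, no line bundle~$N$, and the object ``$\mathcal{V}_x\otimes N\otimes\RR q_*\mathcal{O}(1)$ on $\moduli'$'' does not exist. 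The paper instead stays on $Q(\mathcal{W},x)$ and only uses that over the \emph{stable} locus the fibres of~$\psi$ are $\mathbb{P}^{r-1}$'s on which $\pi^*(\mathcal{W}_y^\vee)$ restricts to $\mathcal{O}_{\mathbb{P}^{r-1}}(-1)^{\oplus r}$ (\cref{proposition:restriction-to-fibre}), so that all $\RR^j\psi_*$ vanish there. Second, ``codimension at least~$2$'' is far from sufficient: the restriction $\HH^i(Q(\mathcal{W},x),-)\to\HH^i(U,-)$ is injective only in degrees $i<\codim$ of the complement, while part~(\ref{enumerate:fully-faithful-vanishing-3}) demands vanishing in \emph{all} degrees up to $(r^2-1)(g-1)$. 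No codimension bound alone can reach the top degrees. The paper therefore splits the range: for $i\geq r^2$ it uses nefness of $\mathcal{W}_x$ (from \cref{lemma:pullback}) to prove that $\mathcal{W}_x\otimes\mathcal{W}_y^\vee\otimes\omega_{\moduli_C(r,\mathcal{L})}^\vee$ is ample and applies Le~Potier vanishing (\cref{proposition:ample-vector-bundles,corollary:le-potier-applied}); for $i\leq r^2-1$ it needs the complement of~$U$ to have codimension~$\geq r^2$, and by \cref{lemma:codimension} ($\codim\geq 2(r-1)(g-1)-1$) this is \emph{exactly} the genus condition~\eqref{equation:genus-condition}. So the bound $g_0$ arises from the codimension of the strictly semistable locus, not from a slope or Kodaira-type positivity threshold as you suggest, and your proposal is missing the entire high-degree argument.

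For parts (\ref{enumerate:fully-faithful-vanishing-1})--(\ref{enumerate:fully-faithful-vanishing-2}) your reduction via the tautological sequence is correct but only repackages the problem: since $\pi_*(\mathcal{Q}\otimes\mathcal{O}(1))\cong\adjoint_x\mathcal{W}$ (the trace splitting gives $\mathcal{W}_x\otimes\mathcal{W}_x^\vee\cong\mathcal{O}\oplus\adjoint_x\mathcal{W}$), concentration of $\HH^\bullet(\mathcal{Q}\otimes\mathcal{O}(1))$ in degree~$1$ is equivalent to $\HH^0(\moduli_C(r,\mathcal{L}),\adjoint_x\mathcal{W})=\HH^{\geq2}(\moduli_C(r,\mathcal{L}),\adjoint_x\mathcal{W})=0$. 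Your proposed proof of this --- rewrite as twists of~$\Theta$ and invoke Kodaira-type vanishing --- cannot work. Note that $\HH^1(\moduli_C(r,\mathcal{L}),\adjoint_x\mathcal{W})\neq 0$: the family $\{\mathcal{W}_x\}_{x\in C}$ is non-constant (Narasimhan--Ramanan show $x\mapsto\mathcal{W}_x$ is injective), so Kodaira--Spencer produces a nonzero class in $\HH^1(\moduli_C(r,\mathcal{L}),\sheafEnd(\mathcal{W}_x))=\HH^1(\moduli_C(r,\mathcal{L}),\adjoint_x\mathcal{W})$. You therefore need a statement killing cohomology for $i\geq 2$ while sparing $i=1$, for a bundle of rank $r^2-1$ on a space of dimension $(r^2-1)(g-1)$; positivity theorems (Kodaira, Le~Potier, Sommese) have thresholds governed by the rank and can never deliver such an asymmetric conclusion. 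The paper's proof is genuinely non-formal here: $\HH^0$-vanishing (simplicity of~$\mathcal{W}_x$) is quoted from Narasimhan--Ramanan, and the $\HH^{\geq2}$-vanishing (\cref{proposition:narasimhan-ramanan-vanishing}) extends their proposition~7.7 --- previously known only for $g=2$ --- using control of the Hodge numbers $\hh^{i,1}(\moduli_C(r,\mathcal{L}))$ extracted from del~Ba\~no's closed formula for the Hodge--Poincar\'e polynomial (\cref{lemma:hodge-numbers}). This Hodge-theoretic ingredient is the missing idea in your proposal, and it cannot be replaced by vanishing theorems.
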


For the proof of the semiorthogonal decomposition of \cref{theorem:sod} we need another set of vanishing results, which generalise \cite[parts (4) and (5) of theorem~1.2]{MR3713871}, together with a new vanishing result.
\begin{theorem}
  \label{theorem:sod-vanishing}
  Let~$C$ be a smooth projective curve of genus~$g\geq g_0$, and~$\mathcal{L}$ a line bundle of degree~$1$. Let~$\mathcal{W}$ be the normalised Poincar\'e bundle on~$C\times\moduli_C(r,\mathcal{L})$. Then
	\begin{enumerate}
    \item[1.] $\HH^i(\moduli_C(r,\mathcal{L}),\mathcal{W}_x^\vee)=0$ for all $x\in C$ and all $i\geq 0$.
		\item[2.] $\HH^i(\moduli_C(r,\mathcal{L}),\mathcal{W}_x^\vee\otimes \Theta^\vee)=0$ for all $x\in C$ and all $i\geq 0$.
		\item[2'.] $\HH^i(\moduli_C(r,\mathcal{L}),\mathcal{W}_x\otimes \Theta^\vee)=0$ for all $i\geq 0$.
		\item[3.] $\HH^i(\moduli_C(r,\mathcal{L}), \mathcal{W}_x^\vee\otimes \mathcal{W}_y\otimes \Theta^\vee)=0$ for any $x,y \in C$ and all $i\geq 0$.
	\end{enumerate}
\end{theorem}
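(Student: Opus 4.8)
The plan is to prove all four vanishing statements through the Hecke correspondence at a point, the natural higher-rank analogue of the method Narasimhan uses for $r=2$. Fix $x\in C$, write $\moduli=\moduli_C(r,\mathcal{L})$, and let $\moduli_x=\moduli_C(r,\mathcal{L}(-x))$ be the moduli space whose determinant has degree~$0$. The projective bundle $\mathbb{P}(\mathcal{W}_x)\xrightarrow{p}\moduli$ carries a second morphism
\begin{equation}
  q\colon \mathbb{P}(\mathcal{W}_x)\longrightarrow\moduli_x,
  \qquad
  \bigl(E,\,E_x\twoheadrightarrow Q\bigr)\longmapsto E'=\ker(E\to Q),
\end{equation}
sending a stable bundle together with a one-dimensional quotient of its fibre at $x$ to the elementary modification $E'$ of determinant $\mathcal{L}(-x)$; the fibres of $q$ are again projective spaces, so $\mathbb{P}(\mathcal{W}_x)$ is simultaneously a projectivisation of the Poincar\'e bundle $\mathcal{W}'$ on $C\times\moduli_x$. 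The technical heart of the argument is the resulting dictionary on $\mathbb{P}(\mathcal{W}_x)$ between the two tautological quotient bundles $\mathcal{O}_p(1)$, $\mathcal{O}_q(1)$ and the pullbacks $p^*\Theta$, $q^*\Theta'$ of the two theta bundles; establishing these Picard-group identities is what converts a twist by $\Theta^\vee$ into a twist by the relative $\mathcal{O}(\pm1)$.

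With this dictionary, statements~1, 2 and~2' each become a push--pull computation: pull the sheaf back along $p$, use the projection formula to rewrite the twist by $\Theta^\vee$ in terms of the tautological bundles, and push forward. I would arrange the tautological exponents so that the object restricted to a fibre of $p$ or of $q$ is a line bundle in the acyclic range $\mathcal{O}(-1),\dots,\mathcal{O}(-(r-1))$ on $\mathbb{P}^{r-1}$; then the relevant $\RRR p_*$ or $\RRR q_*$ vanishes and hence so does the cohomology on $\moduli$. Where the fibrewise degree falls outside this range, the computation lands instead on a twist of $\mathcal{O}_{\moduli_x}$ or of $\mathcal{W}'$ on $\moduli_x$, and I would conclude by Kodaira-type vanishing together with standard cohomology computations on the Fano moduli spaces involved (recall $\omega_{\moduli}^\vee\cong\Theta^{\otimes2}$).

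Statement~3 is the genuinely new input, and here I would use the second point $y\neq x$. Running the Hecke correspondence at $x$ transports $\mathcal{W}_x^\vee\otimes\mathcal{W}_y\otimes\Theta^\vee$ to an off-diagonal object on $\moduli_x$ built from $\mathcal{W}'$ at the two distinct points, and the same fibrewise-acyclicity mechanism reduces the claim to the off-diagonal vanishing $\HH^i(\moduli,\mathcal{W}_x\otimes\mathcal{W}_y^\vee)=0$ for $x\neq y$ already supplied by \cref{theorem:fully-faithful-vanishing}. Alternatively, combining the restriction sequence $0\to\mathcal{W}(-x)\to\mathcal{W}\to\mathcal{W}_x\to0$ on $C\times\moduli$ with the corresponding sequence at $y$ expresses the desired cohomology in terms of objects controlled either by \cref{theorem:fully-faithful-vanishing} or by statements~1, 2 and~2'; in both routes the proof is a bootstrap from the earlier vanishing results.

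The main obstacle is that $\moduli_x=\moduli_C(r,\mathcal{L}(-x))$ has determinant of degree $d-1=0$, so $\gcd(r,0)=r$ and this moduli space is no longer fine: it contains strictly semistable points, and the Poincar\'e bundle $\mathcal{W}'$, the morphism $q$, and the tautological identities are defined only over the stable locus. The push--pull computations are therefore a priori valid only after removing the locus in $\mathbb{P}(\mathcal{W}_x)$ where the modification $E'$ fails to be stable, and this is exactly where the hypothesis $g\geq g_0$ of \eqref{equation:genus-condition} is needed: it forces that locus to have codimension large enough that excising it changes none of the cohomology groups in play, so the fibrewise and Kodaira-type vanishing propagate to honest vanishing on $\moduli$. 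Extracting the full acyclicity in \emph{all} degrees $i\geq0$, and in particular controlling $\HH^0$ rather than merely the higher cohomology, is the delicate endpoint of this codimension bookkeeping.
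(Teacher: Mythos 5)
Your proposal has two genuine gaps, and the first is structural. The only vanishing mechanism you invoke is excision across the strictly semistable locus combined with fibrewise acyclicity along the Hecke fibration, and that mechanism cannot produce vanishing in \emph{all} degrees $i\geq 0$. The inclusion $\HH^i(X,\mathcal{F})\hookrightarrow\HH^i(X\setminus Z,\mathcal{F})$ obtained by deleting a closed subset $Z$ is only valid for $i$ below (roughly) $\codim Z$; by \cref{lemma:codimension} the relevant codimension is $2(r-1)(g-1)-1$, whereas the vanishing must reach $\dim\moduli_C(r,\mathcal{L})=(r^2-1)(g-1)$. Since $2(r-1)<(r-1)(r+1)$ for every $r\geq 2$ and both quantities scale linearly in $g$, no genus bound ever makes the excision range cover the top degrees, so your claim that for $g\geq g_0$ removing the unstable locus ``changes none of the cohomology groups in play'' is false; this is the missing half of the proof rather than a delicate endpoint (and note the delicate part is the \emph{high} degrees, not $\HH^0$, which sits squarely inside the excision range). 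The paper's argument is correspondingly two-step: positivity of $\mathcal{W}_x$ (nefness, via \cref{lemma:pullback}) feeds into Le~Potier vanishing for $\bigwedge^j\mathcal{W}_x^\vee$ in degrees $i\geq\binom{r}{j}$, resp.\ Sommese vanishing for the $(r-1)$\dash ample bundle $\mathcal{W}_y\otimes\bigwedge^{r-1}\mathcal{W}_x$ in degrees $i\geq r^2+r-1$, and only the complementary low range uses the Hecke correspondence plus \cref{lemma:codimension}; the genus condition \eqref{equation:genus-condition} is exactly what makes the two ranges meet. A further problem is that the low-degree step must be run with the Hecke correspondence based at an auxiliary point $z\neq x$, because \cref{proposition:restriction-to-fibre} describes the restriction of $\pi^*(\mathcal{W}_y)$ to $\psi$\dash fibres only when the evaluation point differs from the modification point: in your same-point setup, $p^*(\mathcal{W}_x)$ restricted to a fibre of $q$ is an extension involving the tautological quotient bundle (coming from $0\to k(x)\to E'_x\to E_x\to k(x)\to 0$), not a direct sum of copies of one line bundle, so no arrangement of tautological exponents yields fibrewise acyclicity there. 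The paper avoids juggling the $\Theta^\vee$\dash twist on the correspondence altogether via the identification $\mathcal{W}_x\otimes\Theta^\vee\cong\bigwedge^{r-1}\mathcal{W}_x^\vee$, proving vanishing for all $\bigwedge^j\mathcal{W}_x^\vee$, $1\leq j\leq r-1$, at once.

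Second, your treatment of part~3 does not prove the statement that is actually needed: part~3 is asserted, and used in the proof of \cref{theorem:sod}, for \emph{any} pair $x,y\in C$, including $x=y$, since semiorthogonality of the two copies of $\derived^\bounded(C)$ in \eqref{equation:sod} requires $\Hom$\dash vanishing between images of skyscrapers at equal points as well. A reduction to part~(\ref{enumerate:fully-faithful-vanishing-3}) of \cref{theorem:fully-faithful-vanishing} is impossible on the diagonal: for $x=y$ the bundle $\mathcal{W}_x^\vee\otimes\mathcal{W}_x\otimes\Theta^\vee$ decomposes as $\Theta^\vee\oplus(\adjoint_x\mathcal{W}\otimes\Theta^\vee)$, about which the off-diagonal vanishing says nothing; moreover the untwisted $\sheafEnd(\mathcal{W}_x)$ has \emph{non}zero cohomology in degrees $0$ and $1$, so the twist is essential and the diagonal case genuinely needs a new argument. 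Even off the diagonal, both of your sketches (Hecke transport, and the bootstrap via $0\to\mathcal{W}\otimes p_1^*\mathcal{O}_C(-x)\to\mathcal{W}\to\mathcal{W}_x\to 0$) lose track of how the $\Theta^\vee$ is converted into something \cref{theorem:fully-faithful-vanishing} controls. The paper's proof of part~3 is instead self-contained and uniform in $x,y$: \cref{proposition:k-ample} gives $(r-1)$\dash ampleness of $\mathcal{W}_x$, hence of $\mathcal{W}_y\otimes\bigwedge^{r-1}\mathcal{W}_x\cong\mathcal{W}_y\otimes\mathcal{W}_x^\vee\otimes\Theta$; Sommese vanishing then kills $\HH^i$ for $i\geq r^2+r-1$, and Serre duality (which swaps the roles of $x$ and $y$) kills the complementary range once $(r^2-1)(g-1)\geq 2(r^2+r-1)$, which holds for $g\geq 4$.
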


\begin{remark}
  If~$r=2$ then parts (1) and (2) are related via Serre duality, but in higher rank this is no longer the case. In arbitrary rank, (2) and (2') are related via Serre duality.
\end{remark}

\paragraph{Structure of the paper}
We prove parts (\ref{enumerate:fully-faithful-vanishing-1}) and (\ref{enumerate:fully-faithful-vanishing-2}) of \cref{theorem:fully-faithful-vanishing} in \cref{section:generalising-narasimhan-ramanan}. The proof of part (\ref{enumerate:fully-faithful-vanishing-3}) is more technical, and occupies \cref{section:high-degree-vanishing,section:low-degree-vanishing}. The structure of the proof is as in \cite{MR3713871}, but we highlight the complications arising in the higher rank case, and why one only gets a proof in the case where~$d=1$.

The proof of parts (1) and (2) (resp.~(2')) of \cref{theorem:sod-vanishing} is done in \cref{section:sod-vanishing-1-2}, where we prove a more general vanishing statement for exterior powers of~$\mathcal{W}_x^\vee$, along the same lines as \cref{section:high-degree-vanishing,section:low-degree-vanishing}. Finally, the proof of part (3) occupies \cref{section:sod-vanishing-3}.

In \cref{section:remarks} we explain how \cref{theorem:fully-faithful,theorem:sod} follows from \cref{theorem:fully-faithful-vanishing,theorem:sod-vanishing}, and give some concluding remarks.

\paragraph{Acknowledgements} Both authors were supported by the Max Planck Institute for Mathematics in Bonn. They thank Patrick Brosnan and Sergey Galkin for interesting conversations.

\section{Preliminaries and notation}
\label{section:preliminaries}
Let~$k$ be an algebraically closed field of characteristic~0. Throughout this article we will take~$C$ a smooth projective curve over~$k$ of genus~$g\geq 2$. Associated to a rank~$r$ and degree~$d$ such that~$\gcd(r,d)=1$ there exists a smooth projective moduli space~$\moduli_C(r,d)$ of dimension~$r^2(g-1)+1$, with a determinant morphism to~$\Pic^d(C)$. Fixing a line bundle~$\mathcal{L}$ of degree~$d$ on~$C$ we can consider the fibre over the point~$[\mathcal{L}]$ in~$\Pic^d(C)$, which will be denoted~$\moduli_C(r,\mathcal{L})$.

Now~$\moduli_C(r,\mathcal{L})$ has~$\Pic(\moduli_C(r,\mathcal{L}))\cong\mathbb{Z}\Theta$ with~$\Theta$ the ample generator of the Picard group, such that
\begin{equation}
  \omega_{\moduli_C(r,\mathcal{L})}\cong\Theta^{\otimes-2}.
\end{equation}
In particular, $\moduli_C(r,\mathcal{L})$ is a~$(r^2-1)(g-1)$\dash dimensional Fano variety of index~2. We will also use the appropriate analogues of these results when~$\gcd(r,d)\geq2$. For a proof one is referred to \cite{MR999313}.

As~$\moduli_C(r,\mathcal{L})$ is a \emph{fine} moduli space, there exists a universal family~$\mathcal{W}$ on~$C\times\moduli_C(r,\mathcal{L})$, the \emph{Poincar\'e bundle}. This universal family is unique if we normalise it as in \cite[remark~2.9]{MR0325615}: if we denote~$\ell\geq 0$ the minimal integer such that~$\ell d\equiv 1\bmod r$ then we can assume
\begin{equation}
  \label{equation:normalisation}
  \cc_1(\mathcal{W}_x)\cong\Theta^{\otimes\ell},
\end{equation}
where~$\mathcal{W}_x=\mathcal{W}_{\{x\}\times\moduli_C(r,\mathcal{L})}$.

\begin{remark}
  The dependence on~$\ell$ is the reason why we have to restrict to~$d\equiv 1\bmod r$. Remark that by Serre duality and tensoring with~$\mathcal{O}_C(x)$ we can always assume~$d\in[0,\ldots,r/2]$, as one can identify the moduli spaces for different~$d$, so we will use~$d=1$. We expect that for other residue classes the result still holds.
\end{remark}

Using~$\mathcal{W}$ and the projections
\begin{equation}
  \begin{tikzcd}
    & C\times\moduli_C(r,\mathcal{L}) \arrow[ld, swap, "p_1"] \arrow[rd, "p_2"] \\
    C & & \moduli_C(r,\mathcal{L})
  \end{tikzcd}
\end{equation}
we can then construct the Fourier--Mukai functor
\begin{equation}
  \Phi_{\mathcal{W}}=\RRR p_{2,*}(p_1^*(-)\otimes\mathcal{W})\colon\derived^\bounded(C)\to\derived^\bounded(\moduli_C(r,\mathcal{L})).
\end{equation}

\section{Generalisation of a result by Narasimhan--Ramanan}
\label{section:generalising-narasimhan-ramanan}
In this section we prove parts (\ref{enumerate:fully-faithful-vanishing-1}) and (\ref{enumerate:fully-faithful-vanishing-2}) of \cref{theorem:fully-faithful-vanishing}. At this point it is not required that~$d=1$, it is enough that~$\gcd(r,d)=1$.

For (\ref{enumerate:fully-faithful-vanishing-1}) it suffices to observe that the result cited in \cite{MR3713871} (i.e.~\cite[theorem~2(b)]{MR0384797}) is already valid for all ranks. However for the proof of (\ref{enumerate:fully-faithful-vanishing-2}) \cite[proposition~7.7]{MR0384797} is used, which is only given for~$g=2$. We will show that the expectation expressed in \cite[remark~7.2]{MR0384797} is correct, and that the vanishing of cohomology of~$\adjoint_x\mathcal{W}$ (i.e.~the restriction to~$\{x\}\times\moduli_C(r,\mathcal{L})$ of the traceless endomorphisms of~$\mathcal{W}$) is indeed valid in arbitrary rank, using some new information on Hodge numbers which was not available when op.~cit.~was published.

\begin{proposition}
  \label{proposition:narasimhan-ramanan-vanishing}
  Let~$C$ be a smooth projective curve of genus~$g\geq 2$, and~$\mathcal{L}$ a line bundle of degree~$d$ on~$C$ such that~$\gcd(r,d)=1$. Let~$\mathcal{W}$ be the normalised Poincar\'e bundle on~$C\times\moduli_C(r,\mathcal{L})$, where~$\gcd(r,d)=1$. For all~$x\in C$ and~$i\geq 2$ we have that
  \begin{equation}
    \label{equation:adjoint-vanishing}
    \HH^i(\moduli_C(r,\mathcal{L}),\adjoint_x\mathcal{W})=0
  \end{equation}
\end{proposition}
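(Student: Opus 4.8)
Write $\moduli \coloneqq \moduli_C(r,\mathcal{L})$. Since we work in characteristic~$0$, the trace map splits the endomorphism bundle as $\mathcal{W}_x\otimes\mathcal{W}_x^\vee\cong\mathcal{O}_{\moduli}\oplus\adjoint_x\mathcal{W}$, and as $\moduli$ is Fano we have $\HH^i(\moduli,\mathcal{O}_{\moduli})=0$ for $i\geq 1$; thus \eqref{equation:adjoint-vanishing} is exactly the traceless input needed for part~(\ref{enumerate:fully-faithful-vanishing-2}) of \cref{theorem:fully-faithful-vanishing}. The plan is to trade the cohomology of the fibrewise bundle $\adjoint_x\mathcal{W}$ on $\moduli$ for the cohomology of the \emph{relative} adjoint bundle $\adjoint\mathcal{W}$ (the traceless part of $\sheafEnd(\mathcal{W})$) on $C\times\moduli$, whose higher direct images are governed by deformation theory, and then to feed in the now-available Hodge numbers of $\moduli$.

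First I would resolve the divisor $\{x\}\times\moduli=p_1^{-1}(x)\subset C\times\moduli$ by its Koszul complex and tensor with $\adjoint\mathcal{W}$ to obtain
\begin{equation}
  0\to\adjoint\mathcal{W}\otimes p_1^*\mathcal{O}_C(-x)\to\adjoint\mathcal{W}\to i_{x,*}\adjoint_x\mathcal{W}\to 0
\end{equation}
on $C\times\moduli$, where $i_x\colon\moduli\hookrightarrow C\times\moduli$ is the inclusion of the fibre. Applying $\RRR p_{2,*}$ and using that for a stable bundle $E$ with fixed determinant one has $\HH^0(C,\adjoint(E))=0$ by simplicity, while $\adjoint(E)$ is semistable of slope~$0$ so that its twist $\adjoint(E)(-x)$ has negative slope and hence $\HH^0(C,\adjoint(E)(-x))=0$, the two relevant $\RR^0p_{2,*}$ terms vanish. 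The Kodaira--Spencer isomorphism gives $\RR^1p_{2,*}\adjoint\mathcal{W}\cong\tangent_{\moduli}$, and setting $\mathcal{G}\coloneqq\RR^1p_{2,*}(\adjoint\mathcal{W}\otimes p_1^*\mathcal{O}_C(-x))$ (a vector bundle of rank $(r^2-1)g$), the long exact sequence of higher direct images collapses to
\begin{equation}
  \label{equation:adjoint-G-ses}
  0\to\adjoint_x\mathcal{W}\to\mathcal{G}\to\tangent_{\moduli}\to 0.
\end{equation}

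The cohomology of $\adjoint_x\mathcal{W}$ is then read off from the long exact sequence of \eqref{equation:adjoint-G-ses}: for $i\geq 2$ the group $\HH^i(\moduli,\adjoint_x\mathcal{W})$ is squeezed between $\HH^{i-1}(\moduli,\tangent_{\moduli})$ and $\HH^i(\moduli,\mathcal{G})$, so it suffices to control these two. The cohomology of $\tangent_{\moduli}$ is precisely the deformation theory of $\moduli$ treated in \cite{MR0384797}, and $\HH^i(\moduli,\mathcal{G})\cong\HH^{i+1}(C\times\moduli,\adjoint\mathcal{W}\otimes p_1^*\mathcal{O}_C(-x))$ by the degenerate Leray spectral sequence; via relative Serre duality on $C$ (which exhibits $\mathcal{G}^\vee$ as $p_{2,*}(\adjoint\mathcal{W}\otimes p_1^*(\omega_C(x)))$) and the self-duality of the adjoint bundle, both are ultimately controlled by the Hodge numbers $\hh^{p,q}(\moduli)$.

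The main obstacle is exactly this last numerical step in arbitrary rank. Concretely, it suffices to show that the map $\HH^j(\moduli,\mathcal{G})\to\HH^j(\moduli,\tangent_{\moduli})$ induced by $\mathcal{O}_C(-x)\hookrightarrow\mathcal{O}_C$ is surjective for $j=i-1$ and injective for $j=i$ whenever $i\geq 2$, and this becomes a system of identities among the $\hh^{p,q}(\moduli)$. For $g=2$, $r=2$ this is \cite[proposition~7.7]{MR0384797}, proved there by explicit geometry; what makes the general case accessible is that the full Hodge--Poincar\'e polynomial of $\moduli_C(r,\mathcal{L})$ is now known for all~$r$, so that the expectation of \cite[remark~7.2]{MR0384797} can be checked directly. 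I would emphasise that this argument uses only $g\geq 2$ and nothing beyond $\gcd(r,d)=1$, consistent with the hypotheses of the proposition.
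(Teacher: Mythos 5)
Your skeleton is exactly the one the paper (following Narasimhan--Ramanan) relies on: writing $\moduli=\moduli_C(r,\mathcal{L})$, the sequence $0\to\adjoint_x\mathcal{W}\to\mathcal{G}\to\tangent_{\moduli}\to0$ obtained by pushing the restriction sequence along $p_2$, with the $p_{2,*}$\dash terms killed by stability and the identification $\RR^1p_{2,*}\adjoint\mathcal{W}\cong\tangent_{\moduli}$, is precisely the mechanism of \cite[proposition~7.7]{MR0384797}, and both you and the paper propose del~Ba\~no's Hodge--Poincar\'e polynomial as the new ingredient that makes the argument work beyond $g=2$, $r=2$. The gap is in how you close the argument. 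You reduce to surjectivity/injectivity of the maps $\HH^j(\moduli,\mathcal{G})\to\HH^j(\moduli,\tangent_{\moduli})$ and then assert that this ``becomes a system of identities among the $\hh^{p,q}(\moduli)$'' which ``can be checked directly'' from the known polynomial. That cannot be right as stated: numerical identities among Hodge numbers cannot force a particular map of cohomology groups to be injective or surjective, and, more to the point, neither $\HH^*(\moduli,\tangent_{\moduli})$ nor $\HH^*(\moduli,\mathcal{G})$ is computed by the Hodge--Poincar\'e polynomial ($\tangent_{\moduli}$ is not a bundle of differential forms, so its cohomology is a \emph{twisted} Hodge group). The Hodge numbers $\hh^{1,i}$ enter through $\HH^i(\moduli,\Omega^1_{\moduli})$, that is through the dual sequence $0\to\Omega^1_{\moduli}\to\mathcal{G}^\vee\to\adjoint_x\mathcal{W}\to0$ (you record the relative-duality description $\mathcal{G}^\vee\cong p_{2,*}(\adjoint\mathcal{W}\otimes p_1^*(\omega_C(x)))$ but never use it), and what the argument consumes is \emph{vanishing}: $\hh^{1,i}=0$ for $i\geq3$ kills the terms sitting next to $\HH^i(\moduli,\adjoint_x\mathcal{W})$ in that long exact sequence, while the values $\hh^{1,1}=1$ and $\hh^{2,1}=g$ are what one needs to handle the boundary case $i=2$.

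Concretely, the step you leave undone is the paper's entire mathematical contribution to this proposition, namely \cref{lemma:hodge-numbers}: extracting from \eqref{equation:del-bano} that $\hh^{0,1}=0$, $\hh^{1,1}=1$, $\hh^{2,1}=g$ and $\hh^{i,1}=0$ for all $i\geq3$. This extraction is not a formality. One must first adjust del~Ba\~no's formula to the fixed-determinant space (stripping the Jacobian factor $(1+x)^g(1+y)^g$, and being aware of the typo the paper flags in the exponent of the last factor), and then argue that only the composition with $\ell=1$ can contribute to a monomial $x^iy$, because for $\ell\geq2$ the $(xy)$\dash exponent in the last factor of \eqref{equation:del-bano} is at least~$2$. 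To complete your proposal you would need to (i) state exactly which degrees of $\hh^{1,i}$ NR's argument requires to vanish, (ii) prove those vanishings from the formula, and (iii) run the long exact sequence of the dual sequence above (or quote NR's argument with these inputs). As written, the proposal reproduces NR's reduction but stops exactly where the generalisation to arbitrary rank actually has to be proved.
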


The proof of \cite[proposition~7.7]{MR0384797} can be generalised to arbitrary rank, provided one has control over the Hodge numbers~$\hh^{1,i}(\moduli_C(r,\mathcal{L}))$. In order to do this, we will use the closed formula for the Hodge--Poincar\'e polynomial as obtained in del~Ba\~no \cite[corollary~5.1]{MR1817504}. Recall that the Hodge--Poincar\'e polynomial~$\HP(X,x,y)$ of a smooth projective variety~$X$ is given by~$\sum_{p,q\geq 0}\hh^{p,q}(X)x^py^q$.

The formula in loc.~cit.~gives the Hodge--Poincar\'e polynomial of~$\moduli_C(r,d)$, and fixing the determinant changes the Hodge--Poincar\'e polynomial by removing a factor~$(1+x)^g(1+y)^g$ arising from the Jacobian of~$C$\footnote{There is a minor typo in the second summation in the exponent of the last factor in \cite[corollary~5.1]{MR1817504}.}. Taking these observations into account, and denoting~$\langle\alpha\rangle$ the decimal part of a real number~$\alpha$, we have
\begin{equation}
  \label{equation:del-bano}
  \begin{aligned}
    \HP(\moduli_C(r,\mathcal{L}),x,y)
    &=
    \sum_{r_1+\ldots+r_\ell=r}
    (-1)^{\ell-1}\frac{\left( (1+x)^g(1+y)^g \right)^{\ell-1}}{(1-xy)^{\ell-1}} \\
    &\qquad\qquad\qquad\cdot\prod_{j=1}^\ell\prod_{i=1}^{r_j-1}\frac{(1+x^iy^{i+1})^g(1+x^{i+1}y^i)^g}{(1-(xy)^i)(1-(xy)^{i+1})} \\
    &\qquad\qquad\qquad\cdot\prod_{j=1}^{\ell-1}\frac{1}{1-(xy)^{r_j+r_{j+1}}} \\
    &\qquad\qquad\qquad\cdot(xy)^{\sum_{i<j}r_ir_j(g-1)+\sum_{i=1}^{\ell-1}(r_i+r_{i+1})\langle -(r_1+\ldots+r_i)d/r\rangle}
  \end{aligned}
\end{equation}
where we sum over all compositions of~$r$.

From this we can read off the following dimensions.
\begin{lemma}
  \label{lemma:hodge-numbers}
  We have that
  \begin{enumerate}
    \item $\hh^{0,1}(\moduli_C(r,\mathcal{L}),)=0$,
    \item $\hh^{1,1}(\moduli_C(r,\mathcal{L}),)=1$,
    \item $\hh^{2,1}(\moduli_C(r,\mathcal{L}),)=g$,
    \item $\hh^{i,1}(\moduli_C(r,\mathcal{L}),)=0$ for all~$i\geq 3$.
  \end{enumerate}
\end{lemma}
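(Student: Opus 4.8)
The plan is to read off the four Hodge numbers directly from the closed formula~\eqref{equation:del-bano}, using that $\hh^{i,1}(\moduli_C(r,\mathcal{L}))$ is by definition the coefficient of $x^iy^1$ in $\HP(\moduli_C(r,\mathcal{L}),x,y)$. Since that expression is a genuine polynomial, it suffices to compute the coefficient of $y^1$ while treating every denominator as its geometric power series expansion in $x$ and $y$.

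First I would isolate the contribution of the trivial composition $\ell=1$, $r_1=r$. For this term the sign, the prefactor $((1+x)^g(1+y)^g)^{\ell-1}/(1-xy)^{\ell-1}$, the product over $j=1,\ldots,\ell-1$, and the monomial $(xy)^{\cdots}$ all reduce to $1$, leaving
\[
  \prod_{i=1}^{r-1}\frac{(1+x^iy^{i+1})^g(1+x^{i+1}y^i)^g}{(1-(xy)^i)(1-(xy)^{i+1})}.
\]
Working modulo $y^2$, the only factors that are not $\equiv1$ are $(1+x^2y)^g$ from $i=1$ in the numerator and $1/(1-xy)$ from $i=1$ in the denominator, so this product equals $(1+gx^2y)(1+xy)=1+(x+gx^2)y$ modulo $y^2$. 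The coefficients of $x^0y$, $x^1y$, $x^2y$ and of $x^iy$ for $i\geq3$ are therefore $0$, $1$, $g$ and $0$, matching the four claimed values.

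The remaining and crucial step is to show that no composition with $\ell\geq2$ contributes to the coefficient of $y^1$. Each such term carries a factor $(xy)^E$ with
\[
  E=(g-1)\sum_{i<j}r_ir_j+\sum_{i=1}^{\ell-1}(r_i+r_{i+1})\langle-(r_1+\ldots+r_i)d/r\rangle,
\]
while all its other factors expand as power series in $x,y$ with non-negative exponents; hence the whole term has $y$\dash valuation at least $E$, and it is enough to check $E\geq2$. For $g\geq3$ this is immediate, since $\sum_{i<j}r_ir_j\geq1$ already forces $E\geq g-1\geq2$. The hard part is the boundary case $g=2$, where the first summand is just $\sum_{i<j}r_ir_j$; this is $\geq2$ unless $\ell=2$ and $r_1=r_2=1$ (so $r=2$), and in that single remaining case the arithmetic summand equals $2\langle-d/2\rangle=1$ because $\gcd(r,d)=1$ makes $d$ odd, giving $E=1+1=2$. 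This is the only place where the crude estimate fails and one must invoke the fractional-part term in the exponent.

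Combining the two computations, the coefficient of $y^1$ in $\HP(\moduli_C(r,\mathcal{L}),x,y)$ equals $x+gx^2$, which yields $\hh^{0,1}=0$, $\hh^{1,1}=1$, $\hh^{2,1}=g$ and $\hh^{i,1}=0$ for $i\geq3$. As a sanity check, parts (1) and (2) also follow from $\moduli_C(r,\mathcal{L})$ being Fano with $\Pic\cong\mathbb{Z}\Theta$, so that $H^1(\mathcal{O})=0$ and $H^2$ is of pure type $(1,1)$ of rank~$1$.
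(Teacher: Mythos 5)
Your proof is correct and takes essentially the same approach as the paper: isolate the $\ell=1$ composition in del Ba\~no's formula \eqref{equation:del-bano}, note that every term with $\ell\geq 2$ carries a factor $(xy)^E$ with $E\geq 2$ while all other factors expand with non-negative exponents, and then compute the $\ell=1$ product modulo $y^2$ to obtain the coefficient $x+gx^2$ of $y$. The only difference is that you spell out the case analysis behind the bound $E\geq 2$ (in particular the boundary case $g=2$, $r=2$, where the fractional-part term in the exponent is needed), a step the paper's proof asserts without detail.
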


\begin{proof}
  We observe that the only composition~$r=r_1+\ldots+r_\ell$ contributing to the coefficient of~$x^iy$ is the composition with~$\ell=1$. Indeed, developing all the denominators as a power series in~$xy$ and multiplying them, we see that factor on the fourth line of \eqref{equation:del-bano} is zero for~$\ell=1$ and bounded below by~2 for~$\ell\geq 2$. Hence for the equalities in the lemma, it suffices to understand the contribution with~$\ell=1$.

  In this case the only non-trivial factor in \eqref{equation:del-bano} is the second one. It now suffices to observe that there will not be a monomial~$y$, that~$xy$ appears with coefficient~$1$ by the factor~$(1-xy)$ in the denominator for~$i=1$, that~$x^2y$ appears with coefficient~$g$ by the factor~$(1+x^2y)^g$ for~$i=2$ and that there cannot be any monomials of the form~$x^iy$ for~$i\geq 3$.
\end{proof}

\begin{proof}[Proof of part (\ref{enumerate:fully-faithful-vanishing-2}) of \cref{theorem:fully-faithful-vanishing}]
  It suffices to combine \cref{proposition:narasimhan-ramanan-vanishing} with the defining short exact sequence
  \begin{equation}
    0\to\adjoint_x\mathcal{W}\to\sheafEnd(\mathcal{W}_x)\cong\mathcal{W}_x\otimes\mathcal{W}_x^\vee\to\mathcal{O}_{\moduli_C(r,\mathcal{L})}\to 0,
  \end{equation}
  and the vanishing of the higher cohomology of the structure sheaf as~$\moduli_C(r,\mathcal{L})$ is Fano.
\end{proof}

\begin{remark}
  One can actually prove more. Recall that the \emph{level} of a Hodge structure is defined as the maximum value of~$|p-q|$ as~$(p,q)$ ranges over~$\hh^{p,q}\neq 0$. In \cite{MR1895918} it was shown that for~$\moduli_C(2,\mathcal{L})$ the level of~$\HH^i(\moduli_C(2,\mathcal{L}),\mathbb{Q})$ was bounded above by~$\lfloor i/3\rfloor$. For~$r\geq 3$ the same is true, which immediately implies~\cref{lemma:hodge-numbers}. For~$i\gg 0$ this bound can even be improved, but we haven't found a nice closed formula for it.
\end{remark}

\section{Determinant of cohomology}
\label{section:determinant-of-cohomology}
One of the main ingredients in the proof of part (\ref{enumerate:fully-faithful-vanishing-3}) of \cref{theorem:fully-faithful-vanishing} is an explicit description of the determinant of cohomology. The following proposition is a generalisation of \cite[proposition~3.1]{MR3713871}, taking the extra complication for arbitrary~$r$ and~$d$ such that~$\gcd(r,d)=1$ into account.

\begin{proposition}
  \label{proposition:determinant-of-cohomology}
  Let~$C$ be a smooth projective curve of genus~$g\geq 2$, and~$\mathcal{L}$ a line bundle of degree~$d$ on~$C$ such that~$\gcd(r,d)=1$. Let~$\mathcal{W}$ be the normalised Poincar\'e bundle on~$C\times\moduli_C(r,\mathcal{L})$, where~$\gcd(r,d)=1$. Then there exists an isomorphism
  \begin{equation}
    \det\left( \RRR p_{2,*}(\mathcal{W})^\vee \right)^\vee\cong\mathcal{L}^{\otimes(1-\ell d)/r+\ell(1-g)}
  \end{equation}
  where~$\ell\geq 0$ is minimal such that~$\ell d\equiv 1\bmod r$.
\end{proposition}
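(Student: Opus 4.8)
The target $\det(\RRR p_{2,*}\mathcal{W}^\vee)^\vee$ is a line bundle on~$\moduli_C(r,\mathcal{L})$, and since $\Pic(\moduli_C(r,\mathcal{L}))\cong\mathbb{Z}\Theta$ is torsion free it is determined by its first Chern class; so the plan is to compute this class by Grothendieck--Riemann--Roch. As $p_2$ is the projection from~$C\times\moduli_C(r,\mathcal{L})$ with one-dimensional fibres and relative tangent sheaf~$p_1^*\tangent_C$, we have
\begin{equation*}
  \chern(\RRR p_{2,*}\mathcal{W}^\vee)
  =
  p_{2,*}(\chern(\mathcal{W}^\vee)\cdot p_1^*\todd(\tangent_C)),
\end{equation*}
and taking degree-one parts gives $\cc_1(\det(\RRR p_{2,*}\mathcal{W}^\vee)^\vee)=-\cc_1(\RRR p_{2,*}\mathcal{W}^\vee)$. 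Since $p_{2,*}$ integrates over the fibre~$C$, only the Künneth summand $\HH^2(C)\otimes\HH^2(\moduli_C(r,\mathcal{L}))$ of the integrand survives. Writing $f=p_1^*[\mathrm{pt}]$ (so that $f^2=0$ and $p_{2,*}f=1$) and $\theta=\cc_1(\Theta)$, the whole computation reduces to extracting the coefficient of~$f\theta$ in $\chern(\mathcal{W}^\vee)\cdot p_1^*\todd(\tangent_C)$, hence to knowing $\chern(\mathcal{W})$ through degree two.

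First I would pin down $\cc_1(\mathcal{W})$. By \cref{lemma:hodge-numbers} we have $\hh^{0,1}(\moduli_C(r,\mathcal{L}))=0$, so $\HH^1(\moduli_C(r,\mathcal{L}))=0$ and Künneth gives $\HH^2(C\times\moduli_C(r,\mathcal{L}))\cong\HH^2(C)\oplus\HH^2(\moduli_C(r,\mathcal{L}))$; moreover $\HH^2(\moduli_C(r,\mathcal{L}))=k\theta$ as the space is Fano with $\Pic\cong\mathbb{Z}\Theta$. Restricting $\mathcal{W}$ to~$C\times\{[E]\}$ recovers a bundle of degree~$d$, and restricting to~$\{x\}\times\moduli_C(r,\mathcal{L})$ gives $\cc_1(\mathcal{W}_x)=\ell\theta$ by the normalisation~\eqref{equation:normalisation}. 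Hence $\cc_1(\mathcal{W})=d f+\ell\,p_2^*\theta$. With $\cc_1(\mathcal{W})$ known, the only remaining unknown in the GRR integrand is the coefficient~$\kappa$ of~$f\theta$ in $\chern_2(\mathcal{W})$.

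The crux is to determine~$\kappa$, because $\cc_1(\mathcal{W})$ does not fix $\chern_2(\mathcal{W})$: this is genuinely new data and requires external input on~$\Theta$. I would use the description of the ample generator as a determinant of cohomology from Drézet--Narasimhan~\cite{MR999313}: for a vector bundle~$\xi_0$ on~$C$ with $\chi(E\otimes\xi_0)=0$ for~$[E]\in\moduli_C(r,\mathcal{L})$ --- equivalently $s d+r e+r s(1-g)=0$ for $(s,e)=(\operatorname{rk}\xi_0,\deg\xi_0)$, which forces $r\mid s$ as $\gcd(r,d)=1$, so one may take $s=r$, $e=r(g-1)-d$ --- the line bundle $\det\RRR p_{2,*}(\mathcal{W}\otimes p_1^*\xi_0)$ is~$\Theta^{-1}$. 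Running the same GRR computation for this twisted pushforward, and using $p_1^*(\chern(\xi_0)\todd(\tangent_C))=r-d f$ (by Riemann--Roch on~$C$, since $\chi(\xi_0)=-d$), the coefficient of~$f\theta$ becomes $r\kappa-d\ell$; comparing with $\cc_1(\Theta^{-1})=-\theta$ yields $r\kappa-d\ell=-1$, that is $\kappa=(\ell d-1)/r$. That $\ell d\equiv 1\bmod r$ makes this an integer is a reassuring consistency check.

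Substituting back, and using $\chern_2(\mathcal{W}^\vee)=\chern_2(\mathcal{W})$ together with $\cc_1(\mathcal{W}^\vee)=-\cc_1(\mathcal{W})$, the coefficient of~$f\theta$ in $\chern(\mathcal{W}^\vee)\cdot p_1^*\todd(\tangent_C)$ is $\kappa-\ell(1-g)$, the term $-\ell(1-g)$ coming from $-\cc_1(\mathcal{W})\cdot(1-g)f$. Hence $\cc_1(\RRR p_{2,*}\mathcal{W}^\vee)=(\kappa-\ell(1-g))\theta$, and therefore
\begin{equation*}
  \cc_1(\det(\RRR p_{2,*}\mathcal{W}^\vee)^\vee)=(\ell(1-g)-\kappa)\theta=\left(\frac{1-\ell d}{r}+\ell(1-g)\right)\theta .
\end{equation*}
By torsion-freeness of~$\Pic(\moduli_C(r,\mathcal{L}))$ this identifies the line bundle with $\Theta^{\otimes(1-\ell d)/r+\ell(1-g)}$, as claimed. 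The main obstacle is the third step: $\kappa$ is not visible from $\cc_1(\mathcal{W})$, and determining it hinges on correctly matching the determinant-of-cohomology normalisation of~$\Theta$ from~\cite{MR999313} --- in particular the sign/inverse convention, which the $r=2$ case of~\cite{MR3713871} confirms --- with the balanced twist~$\xi_0$.
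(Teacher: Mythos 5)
Your proof is correct, and it takes a route that differs from the paper's primary argument while being close in spirit to the alternative proof the paper gives in \cref{appendix:alternative-proof}. The paper's main proof in \cref{section:determinant-of-cohomology} avoids Grothendieck--Riemann--Roch entirely: it uses the isomorphism $\phi\colon\moduli_C(r,\mathcal{L})\to\moduli_C(r,\mathcal{L}^\vee)$ induced by $\mathcal{W}^\vee$, cites \cite[proposition~2.1]{MR3713871} for $\Theta\cong\phi^*(\Theta')\cong\left(\det(\RRR p_{2,*}\mathcal{W}^\vee)^\vee\right)^{\otimes r}\otimes\det(\mathcal{W}_x^\vee)^{\otimes -d+r(1-g)}$, and then concludes with the normalisation \eqref{equation:normalisation}. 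The appendix proof has the same GRR skeleton as yours --- everything reduces to the single unknown $p_{2,*}\chern_2(\mathcal{W})$, i.e.\ your $\kappa$ (written there as $\tfrac12 p_{2,*}N_2(\mathcal{W})$) --- but it pins $\kappa$ down differently: it applies GRR to $\sheafEnd(\mathcal{W})$ and uses the Narasimhan--Ramanan identification $\omega_{\moduli_C(r,\mathcal{L})}\cong\det\RRR p_{2,*}(\adjoint\mathcal{W})$ from \cite{MR0325615} together with the index\dash 2 statement $\omega\cong\Theta^{\otimes-2}$, arriving at exactly your relation $r\kappa=\ell d-1$ (\cref{lemma:equality}). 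Your anchor --- the Dr\'ezet--Narasimhan description of $\Theta$ as $\det\left(\RRR p_{2,*}(\mathcal{W}\otimes p_1^*\xi_0)\right)^\vee$ for a rank\dash $r$ twist $\xi_0$ with $\chi=0$ --- is equally valid and has two mild advantages: it is the same result of \cite{MR999313} the paper already invokes for $\Pic\moduli_C(r,\mathcal{L})\cong\mathbb{Z}\Theta$, and since $\chi(E\otimes\xi_0)=0$ the determinant bundle is independent of the choice of universal bundle, which makes it transparent that the normalisation enters only through $\cc_1(\mathcal{W}_x)=\ell\theta$. The sign/inverse convention you flag is the genuine risk point, but it is fixed by the requirement that the theta bundle be the ample generator (effective on the locus where $\HH^0\neq 0$), and your outcome agrees with both of the paper's proofs. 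One final remark: the right-hand side $\mathcal{L}^{\otimes(1-\ell d)/r+\ell(1-g)}$ in the statement should be read as $\Theta^{\otimes(1-\ell d)/r+\ell(1-g)}$ (a typo, since $\mathcal{L}$ lives on $C$ and not on the moduli space), and that is precisely what you prove.
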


\begin{proof}
  Consider the moduli space~$\moduli_C(r,\mathcal{L}^\vee)$. The family of vector bundles~$\mathcal{W}^\vee$ on~$C\times\moduli_C(r,\mathcal{L})$ gives an isomorphism
  \begin{equation}
    \phi\colon\moduli_C(r,\mathcal{L})\overset{\cong}{\to}\moduli_C(r,\mathcal{L}^\vee).
  \end{equation}
  Let us denote the ample generators of~$\Pic(\moduli_C(r,\mathcal{L}))$ (resp.~$\Pic(\moduli_C(r,\mathcal{L}^\vee))$) by~$\Theta$ (resp.~$\Theta'$). By \cite[proposition~2.1]{MR3713871} we get that
  \begin{equation}
    \Theta\cong\phi^*(\Theta')\cong\left( \det(\RRR p_{2,*}\mathcal{W}^\vee)^\vee \right)^{\otimes r}\otimes\det(\mathcal{W}_x^\vee)^{\otimes -d+r(1-g)}.
  \end{equation}
  Because~$\mathcal{W}$ is normalised we get~$\det\mathcal{W}_x\cong\Theta^\ell$ as explained in \cref{section:preliminaries}, where~$\ell$ is the minimal non-negative integer such that~$\ell d\equiv 1\bmod r$. Hence we obtain
  \begin{equation}
    \Theta\cong\left( \det(\RRR p_{2,*}\mathcal{W}^\vee)^\vee \right)^{\otimes r}\otimes\Theta^{\otimes\ell d+\ell r(g-1)}.
  \end{equation}
  This proves the proposition.
\end{proof}

In \cref{appendix:alternative-proof} we give an alternative proof of this result using the Grothendieck--Riemann--Roch formula.

\section{Cohomology vanishing for \texorpdfstring{$\mathcal{W}_x\otimes\mathcal{W}_y^\vee$}{Wx tensor Wy dual} in high degrees}
\label{section:high-degree-vanishing}
From this point on we will impose the condition that~$d=1$. As mentioned before, we expect the result is still valid for any degree coprime to the rank, but leave this for future work.

We will split the proof of the vanishing as stated in part (\ref{enumerate:fully-faithful-vanishing-3}) of \cref{theorem:fully-faithful-vanishing} into two parts: first we show it for~$i\geq r^2$, and then we deal with the vanishing in low degrees in \cref{section:low-degree-vanishing}. For both parts we use the \emph{Hecke correspondence}, which we will recall now.

\paragraph{Hecke correspondence}
Let~$x\in C$ be a closed point. If we restrict the Poincar\'e bundle~$\mathcal{W}$ to~$\{x\}\times\moduli_C(r,\mathcal{L})$, we can consider the projective bundle~$\pi\colon\mathbb{P}(\mathcal{W}_x)\to\moduli_C(r,\mathcal{L})$. We will denote the projective bundle by~$Q(\mathcal{W},x)$.

A point~$q$ on~$Q(\mathcal{W},x)$ corresponds to the vector bundle~$\mathcal{W}_{\pi(q)}$ together with a morphism~$\mathcal{W}_{\pi(q)}\twoheadrightarrow k(x)$. We can take the dual of the kernel, which is a vector bundle of rank~$r$ and determinant~$\mathcal{L}^\vee\otimes\mathcal{O}_C(x)$ of degree~$1-d$. The variety~$Q(\mathcal{W},x)$ can be seen to parametrise a family of such bundles. However in general the family of vector bundles may not be semistable.

Consider the short exact sequence
\begin{equation}
  0\rightarrow \mathcal{E}_1\rightarrow \mathcal{E} \rightarrow k(x)\rightarrow 0,
\end{equation}
on a curve~$C$, where~$\mathcal{E}_1$ and~$\mathcal{E}$ are vector bundles of rank~$r$ and~$\deg(\mathcal{E})=1$. Then we have the following:
\begin{lemma}
  If~$\mathcal{E}$ is stable, then~$\mathcal{E}_1$ is semistable.
\end{lemma}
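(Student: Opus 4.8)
The plan is to argue by contradiction, using only the numerics of slopes. First I would record the invariants of~$\mathcal{E}_1$: since $k(x)$ is a torsion sheaf of length~$1$, the inclusion $\mathcal{E}_1\hookrightarrow\mathcal{E}$ is an isomorphism away from~$x$, so $\mathcal{E}_1$ is locally free of the same rank~$r$ as~$\mathcal{E}$ and has degree $\deg\mathcal{E}_1=\deg\mathcal{E}-1=0$. Thus $\mu(\mathcal{E}_1)=0$, whereas $\mu(\mathcal{E})=1/r$.

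To prove that $\mathcal{E}_1$ is semistable it is enough to check that every nonzero subsheaf $\mathcal{F}\subseteq\mathcal{E}_1$ satisfies $\mu(\mathcal{F})\leq\mu(\mathcal{E}_1)=0$. Suppose this fails for some~$\mathcal{F}$, so that $\mu(\mathcal{F})>0$; since the degree is an integer this forces $\deg\mathcal{F}\geq 1$, and as $\operatorname{rk}\mathcal{F}\leq r$ we get $\mu(\mathcal{F})=\deg\mathcal{F}/\operatorname{rk}\mathcal{F}\geq 1/r$. Composing $\mathcal{F}\subseteq\mathcal{E}_1\subseteq\mathcal{E}$ we may view $\mathcal{F}$ as a subsheaf of~$\mathcal{E}$, and it is proper because the quotient $\mathcal{E}/\mathcal{E}_1\cong k(x)$ is nonzero. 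Stability of~$\mathcal{E}$ then gives $\mu(\mathcal{F})<\mu(\mathcal{E})=1/r$, contradicting $\mu(\mathcal{F})\geq 1/r$. Hence no such~$\mathcal{F}$ exists and $\mathcal{E}_1$ is semistable.

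There is no serious obstacle here; the entire content is the integrality observation that a subsheaf of~$\mathcal{E}_1$ of positive slope must already have slope at least $1/r=\mu(\mathcal{E})$, which the \emph{strict} slope inequality coming from stability of~$\mathcal{E}$ forbids. It is worth remarking that this clean gap is exactly what uses $\deg\mathcal{E}=1$: for $d=1$ the slope of a destabilizing subsheaf jumps straight from $\mu(\mathcal{E}_1)=0$ to $1/r=\mu(\mathcal{E})$, leaving no room in between, whereas for general~$d$ a higher-rank subsheaf could have slope strictly between $(d-1)/r$ and $d/r$ and the numerical argument would no longer close immediately. This is consistent with the running assumption $d=1$ in this part of the paper.
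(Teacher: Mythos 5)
Your proof is correct and is essentially the same argument as the paper's: both proceed by contradiction, noting that a destabilizing subsheaf of~$\mathcal{E}_1$ would have positive integral degree and rank at most~$r$, hence slope at least~$1/r$, which the strict inequality $\mu(\mathcal{F})<\mu(\mathcal{E})=1/r$ from stability of~$\mathcal{E}$ forbids. Your write-up is in fact a bit more careful than the paper's (which has some typos conflating $\mathcal{E}$ with $\mathcal{E}_1$ and works with subbundles rather than subsheaves), and your closing remark about why $d=1$ is essential matches the paper's stated reason for restricting to degree one.
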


\begin{proof}
  First observe that the slope~$\mu(\mathcal{E}_1)=0$. Suppose that~$\mathcal{E}$ is not semistable. Let~$\mathcal{F}'$ be a subbundle of~$\mathcal{E}_1$ such that~$\mu(\mathcal{F}')>\mu(\mathcal{E}_1)=0$. Now since~$\mathcal{E}$ is stable, we have
  \begin{equation}
    0 < \mu(F) <\mu(E)=\frac{1}{r}.
  \end{equation}
  But this is impossible since $0<r'<r$ and $\deg(\mathcal{F}')>0$.
\end{proof}

The above lemma shows that Hecke transforms for degree one bundles preserves semistability. Thus we get a diagram
\begin{equation}
  \label{equation:hecke}
  \begin{tikzcd}
    & Q(\mathcal{W},x)\coloneqq\mathbb{P}(\mathcal{W}_x) \arrow[ld, swap, "\psi"] \arrow[rd, "\pi"] \\
    \moduli_C(r,\mathcal{L}^\vee\otimes\mathcal{O}_C(x)) & & \moduli_C(r,\mathcal{L}).
  \end{tikzcd}
\end{equation}

\begin{remark}
  When~$d\neq 1$ we don't have a well-defined morphism~$\psi$ in \eqref{equation:hecke}, there is only a rational morphism. It is conceivable that using parabolic bundles it is possible to resolve the indeterminacy, and continue the proof in this way. We leave this for future work.
\end{remark}

With the above considerations, the statement and proof of \cite[proposition~3.3]{MR3713871} generalises to the following.

\begin{lemma}
  \label{lemma:pullback}
  Let~$\Theta'$ be the ample generator of~$\Pic(\moduli_C(r,\mathcal{L}^\vee\otimes\mathcal{O}_C(x))$. Then
  \begin{equation}
    \label{equation:pullback-ample-generator}
    \psi^*(\Theta')\cong\mathcal{O}_{Q(\mathcal{W},x)}(1)
  \end{equation}
\end{lemma}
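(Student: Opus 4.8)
The plan is to read off a universal Hecke modification on $C\times Q(\mathcal{W},x)$ from the tautological structure of the projective bundle, and then to compute $\psi^*(\Theta')$ as a determinant of cohomology. Write $p_C,p_Q$ for the two projections from $C\times Q(\mathcal{W},x)$ and, abusing notation, write $\mathcal{W}$ also for the pullback of the Poincar\'e bundle along $\identity_C\times\pi$. Since a point of $Q(\mathcal{W},x)$ is a quotient $\mathcal{W}_{\pi(q)}\twoheadrightarrow k(x)$, in Grothendieck's convention there is a tautological quotient $\pi^*\mathcal{W}_x\twoheadrightarrow\mathcal{O}_{Q(\mathcal{W},x)}(1)$. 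Composing the restriction $\mathcal{W}\to i_*i^*\mathcal{W}=i_*\pi^*\mathcal{W}_x$ with $i_*$ of this quotient, where $i\colon\{x\}\times Q(\mathcal{W},x)\hookrightarrow C\times Q(\mathcal{W},x)$, gives a surjection whose kernel $\mathcal{K}$ sits in
\[
  0\to\mathcal{K}\to\mathcal{W}\to i_*\mathcal{O}_{Q(\mathcal{W},x)}(1)\to 0.
\]
By construction $\mathcal{K}^\vee$ is the family of Hecke transforms inducing the classifying morphism $\psi$. As $\{x\}\times Q(\mathcal{W},x)$ is a divisor with trivial normal bundle, dualising this sequence yields
\[
  0\to\mathcal{W}^\vee\to\mathcal{K}^\vee\to i_*\mathcal{O}_{Q(\mathcal{W},x)}(1)\to 0.
\]

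The next step is to express $\psi^*(\Theta')$ through a determinant line bundle, and this is where the main obstacle appears: the Hecke transform has degree $0$, so the target $\moduli_C(r,\mathcal{L}^\vee\otimes\mathcal{O}_C(x))$ is \emph{not} fine and carries no Poincar\'e bundle, so the determinant-of-cohomology description of the ample generator used in \cref{proposition:determinant-of-cohomology} is not directly available. I would get around this with the determinant line bundle formalism of Drezet--Narasimhan \cite{MR999313}: fix a line bundle $F$ on $C$ of degree $g-1$, so that $\chi(\mathcal{V}\otimes F)=0$ for every bundle $\mathcal{V}$ parametrised by the target. For such $F$ the determinant of cohomology $\det\RRR p_{Q,*}(-\otimes p_C^*F)$ is invariant under twisting the family by line bundles pulled back from $Q(\mathcal{W},x)$, hence it descends along the classifying map, and the induced line bundle on the target is the ample generator $\Theta'$. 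Consequently $\psi^*(\Theta')\cong\det\RRR p_{Q,*}(\mathcal{K}^\vee\otimes p_C^*F)$.

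I would then apply additivity of the determinant of cohomology to the dualised Hecke sequence tensored with $p_C^*F$. This splits $\psi^*(\Theta')$ as the product of $\det\RRR p_{Q,*}(\mathcal{W}^\vee\otimes p_C^*F)$ and $\det\RRR p_{Q,*}(i_*\mathcal{O}_{Q(\mathcal{W},x)}(1)\otimes p_C^*F)$. The second factor equals $\mathcal{O}_{Q(\mathcal{W},x)}(1)$, since $p_Q\circ i=\identity$ and $F$ has rank $1$. The first factor is pulled back from $\moduli_C(r,\mathcal{L})$ along $\pi$, so it remains to show it is trivial, and here the choice $\deg F=g-1$ is decisive. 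By Grothendieck--Riemann--Roch, using $\chern(F)\todd(C)=1$ for $\deg F=g-1$, one obtains the difference formula
\[
  \cc_1\bigl(\det\RRR p_{Q,*}(\mathcal{W}^\vee\otimes p_C^*F)\bigr)=\cc_1\bigl(\det\RRR p_{Q,*}\mathcal{W}^\vee\bigr)+(g-1)\,\cc_1(\mathcal{W}_x^\vee).
\]
Plugging in the value $\cc_1(\det\RRR p_{Q,*}\mathcal{W}^\vee)=(g-1)\pi^*\Theta$ extracted from \cite[proposition~2.1]{MR3713871} (as already used in the proof of \cref{proposition:determinant-of-cohomology}) together with the normalisation $\cc_1(\mathcal{W}_x)=\Theta$, the right-hand side becomes $(g-1)\pi^*\Theta-(g-1)\pi^*\Theta=0$. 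Since $\Pic(\moduli_C(r,\mathcal{L}))\cong\mathbb{Z}\Theta$ is torsion-free, the first factor is trivial, and therefore $\psi^*(\Theta')\cong\mathcal{O}_{Q(\mathcal{W},x)}(1)$.

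The principal difficulty is thus the non-fineness of the target; once the determinant line bundle of \cite{MR999313} is used to make sense of $\psi^*(\Theta')$, the argument is a short exact sequence computation. A secondary point requiring care is the bookkeeping of conventions — Grothendieck's convention for $\mathbb{P}(\mathcal{W}_x)$, the normalisation of the determinant line bundle (whether one takes $\det\RRR p_{Q,*}$ or its dual), and the trivialisation of the normal bundle of $\{x\}\times Q(\mathcal{W},x)$ — all of which must be fixed consistently so that the answer comes out as $\mathcal{O}_{Q(\mathcal{W},x)}(1)$ rather than its inverse. As a sanity check one may restrict to a fibre of $\pi$: there $\mathcal{W}$ is a constant family, so the base factor is manifestly trivial, and the fibrewise Hecke computation gives $\mathcal{O}_{\mathbb{P}^{r-1}}(1)$, confirming the coefficient of $\mathcal{O}_{Q(\mathcal{W},x)}(1)$.
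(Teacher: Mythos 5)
Your proof is correct and is essentially the argument the paper has in mind: the paper's ``proof'' is just the assertion that Narasimhan's proposition~3.3 generalises, and that argument is exactly yours --- identify $\psi^*(\Theta')$ with the Drezet--Narasimhan determinant line bundle of the universal Hecke modification $\mathcal{K}^\vee$ (this is where the non-fineness of the degree-zero target is dealt with), split it along the Hecke sequence by additivity, and trivialise the factor coming from the base via the determinant-of-cohomology computation (\cref{proposition:determinant-of-cohomology}, which is precisely why the paper calls that proposition ``one of the main ingredients'' for part~(\ref{enumerate:fully-faithful-vanishing-3})) together with the normalisation $\cc_1(\mathcal{W}_x)=\Theta$. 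The only blemish is a pair of compensating sign slips: dualising the Hecke sequence produces $i_*\mathcal{O}_{Q(\mathcal{W},x)}(-1)$ on the right, since the relevant $\mathcal{E}xt^1$ sheaf is the \emph{dual} line bundle twisted by the (trivial) normal bundle of $\{x\}\times Q(\mathcal{W},x)$, while the ample generator $\Theta'$ descends from the \emph{inverse} $\det\bigl(\RRR p_{Q,*}(-\otimes p_C^*F)\bigr)^{-1}$ of the determinant of cohomology (the theta bundle is the one carrying the section vanishing on the jumping locus); the two inversions cancel, so the conclusion $\psi^*(\Theta')\cong\mathcal{O}_{Q(\mathcal{W},x)}(1)$ stands as written.
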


The following proposition is a generalisation of \cite[proposition~3.2]{MR3713871}.
\begin{proposition}
  \label{proposition:ample-vector-bundles}
  Let~$C$ be a smooth projective curve of genus~$g\geq 2$, and~$\mathcal{L}$ a line bundle of degree~$1$ on~$C$. Let~$\mathcal{W}$ be the normalised Poincar\'e bundle on~$C\times\moduli_C(r,\mathcal{L})$. For all~$x,y\in C$ such that~$x\neq y$ the vector bundle~$\mathcal{W}_x\otimes\mathcal{W}_y^\vee\otimes\omega_{\moduli_C(r,\mathcal{L})}^\vee$ is ample.
\end{proposition}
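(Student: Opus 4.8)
The plan is to exhibit $\mathcal{W}_x\otimes\mathcal{W}_y^\vee\otimes\omega_{\moduli_C(r,\mathcal{L})}^\vee$ as the tensor product of a nef vector bundle with an ample line bundle, from which ampleness is automatic. Since $\omega_{\moduli_C(r,\mathcal{L})}^\vee\cong\Theta^{\otimes2}$ and $\cc_1(\mathcal{W}_x)\cong\Theta$ (as $d=1$ forces $\ell=1$), it suffices to show that $\mathcal{W}_x\otimes\mathcal{W}_y^\vee\otimes\Theta^{\otimes2}$ is ample, and I would do this by regrouping it as $\bigl(\mathcal{W}_x\otimes(\mathcal{W}_y^\vee\otimes\Theta)\bigr)\otimes\Theta$. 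Recall that a vector bundle $\mathcal{E}$ is nef (resp.\ ample) precisely when $\mathcal{O}_{\mathbb{P}(\mathcal{E})}(1)$ is nef (resp.\ ample), and that the Hecke correspondences constructed above compute exactly such tautological line bundles.

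The first factor $\mathcal{W}_x$ is nef: by \cref{lemma:pullback} its tautological bundle $\mathcal{O}_{\mathbb{P}(\mathcal{W}_x)}(1)=\mathcal{O}_{Q(\mathcal{W},x)}(1)$ equals $\psi^*\Theta'$, the pullback of the ample generator along the Hecke map $\psi$, hence is nef. For the second factor I would use the isomorphism $\phi\colon\moduli_C(r,\mathcal{L})\xrightarrow{\ \cong\ }\moduli_C(r,\mathcal{L}^\vee)$ induced by $E\mapsto E^\vee$ that already appears in the proof of \cref{proposition:determinant-of-cohomology}. Writing $\mathcal{W}'$ for the normalised Poincar\'e bundle on $C\times\moduli_C(r,\mathcal{L}^\vee)$, the universal property gives $\phi^*(\mathcal{W}'_y)\cong\mathcal{W}_y^\vee\otimes\mathcal{N}$ for some line bundle $\mathcal{N}$ on $\moduli_C(r,\mathcal{L})$. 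Comparing determinants pins down $\mathcal{N}$: since $\deg\mathcal{L}^\vee=-1$ the normalisation on the dual side reads $\det\mathcal{W}'_y\cong(\Theta')^{\otimes(r-1)}$, while $\det\mathcal{W}_y^\vee\cong\Theta^{\otimes-1}$ and $\phi^*\Theta'\cong\Theta$; taking determinants of $\phi^*(\mathcal{W}'_y)\cong\mathcal{W}_y^\vee\otimes\mathcal{N}$ yields $\Theta^{\otimes(r-1)}\cong\Theta^{\otimes-1}\otimes\mathcal{N}^{\otimes r}$, so $\mathcal{N}^{\otimes r}\cong\Theta^{\otimes r}$ and hence $\mathcal{N}\cong\Theta$, as $\Pic\moduli_C(r,\mathcal{L})\cong\mathbb{Z}\Theta$ is torsion-free. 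Applying \cref{lemma:pullback} on the dual moduli space shows $\mathcal{W}'_y$ is nef, and therefore $\mathcal{W}_y^\vee\otimes\Theta\cong\phi^*(\mathcal{W}'_y)$ is nef as well.

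It remains to assemble these. The tensor product of nef bundles is again nef, so $\mathcal{W}_x\otimes(\mathcal{W}_y^\vee\otimes\Theta)$ is a nef bundle; tensoring a nef bundle with the ample line bundle $\Theta$ produces an ample bundle (both are standard positivity statements for vector bundles). Concretely, on $\mathbb{P}\bigl(\mathcal{W}_x\otimes\mathcal{W}_y^\vee\bigr)$ the relevant tautological line bundle is nef and $\pi$-ample, and twisting by $\pi^*\Theta$ makes it positive on every curve---positive on the $\mathbb{P}$-fibres by relative ampleness and positive transversally by $\pi^*\Theta$---so it is ample by the Nakai--Moishezon criterion. This gives ampleness of $\mathcal{W}_x\otimes\mathcal{W}_y^\vee\otimes\Theta^{\otimes2}$. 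I note that the argument is uniform in $x$ and $y$ and makes no use of $x\neq y$; that hypothesis only reflects the situation in which the proposition will be applied.

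The main obstacle is the nefness of the dual factor $\mathcal{W}_y^\vee\otimes\Theta$, and in particular the need for an analogue of the Hecke correspondence on the other side. The clean route is the identification with $\phi^*(\mathcal{W}'_y)$ above, which reduces everything to \cref{lemma:pullback} on $\moduli_C(r,\mathcal{L}^\vee)$; the delicate points are the determinant bookkeeping that fixes the twist to be exactly one copy of $\Theta$---so that precisely $\Theta^{\otimes2}=\omega_{\moduli_C(r,\mathcal{L})}^\vee$ is consumed, one $\Theta$ to render the dual factor nef and one to upgrade nef to ample---and the fact that the auxiliary modification raises the degree, landing, when $\gcd(r,2)\neq1$, in a moduli space with strictly semistable points. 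That space is nonetheless projective with an ample generator $\Theta'$, which is all the nefness argument requires, exactly as for the degree-$0$ target already used in the construction of $\psi$.
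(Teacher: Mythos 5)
Your reduction to ``nef $\otimes$ ample'' and your determinant bookkeeping are both correct: on $\moduli_C(r,\mathcal{L}^\vee)$ one indeed has $\ell'=r-1$, and comparing determinants in $\phi^*(\mathcal{W}'_y)\cong\mathcal{W}_y^\vee\otimes\mathcal{N}$ forces $\mathcal{N}\cong\Theta$, so $\phi^*(\mathcal{W}'_y)\cong\mathcal{W}_y^\vee\otimes\Theta$. The gap is the single step carrying all the weight: ``applying \cref{lemma:pullback} on the dual moduli space'' to get nefness of $\mathcal{W}'_y$. That lemma is not formal; it presupposes the existence of the Hecke \emph{morphism} $\psi$, which in turn requires that \emph{every} member of the family of Hecke transforms parametrised by the projective bundle be semistable, so that the family classifies a morphism to a projective moduli space whose ample generator one can pull back. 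This is exactly where $d=1$ enters: the semistability lemma of \cref{section:high-degree-vanishing} sandwiches slopes between $0$ and $1/r$ and derives a contradiction from integrality, and this argument does not survive on $\moduli_C(r,\mathcal{L}^\vee)$, where the universal bundles have degree $-1$. There, a point of $\mathbb{P}(\mathcal{W}'_y)$ gives a kernel $\mathcal{V}_1\subset\mathcal{V}$ of degree $-2$; stability of $\mathcal{V}$ only yields $\deg\mathcal{F}\leq-1$ for a rank~$r'$ subbundle $\mathcal{F}\subseteq\mathcal{V}_1$, while semistability of $\mathcal{V}_1$ demands $\deg\mathcal{F}\leq-2r'/r$, which is strictly stronger whenever $r'>r/2$---possible as soon as $r\geq3$. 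So for $r\geq3$ the Hecke transform of a stable degree~$-1$ bundle need not be semistable, and one only has a rational map, precisely as the remark after \eqref{equation:hecke} warns for $d\neq1$. Your closing paragraph misdiagnoses the obstruction: the issue is not strictly semistable points in the \emph{target} (a coarse projective moduli space with its ample $\Theta'$ would indeed suffice), but \emph{unstable} members of the \emph{source} family, at which the classifying map is undefined; since nefness is tested on curves and the indeterminacy locus may contain curves, no pullback argument survives this.

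The statement you need---nefness of $\mathcal{W}_y^\vee\otimes\Theta$, equivalently of $\mathcal{W}'_y$---is true, but the paper obtains it without ever leaving $\moduli_C(r,\mathcal{L})$: since $\mathcal{W}_y$ is nef (Hecke at the point $y$, where $d=1$ and \cref{lemma:pullback} does apply), so is $\bigwedge^{r-1}\mathcal{W}_y$ by closure of nefness under exterior powers \cite[proposition~6.2.12(iv)]{MR2095472}, and the normalisation $\det\mathcal{W}_y\cong\Theta$ gives $\mathcal{W}_y^\vee\otimes\Theta\cong\bigwedge^{r-1}\mathcal{W}_y$. With that substitute, your proof closes as written; the paper groups the factors as $\bigl(\mathcal{W}_x\otimes\Theta\bigr)\otimes\bigwedge^{r-1}\mathcal{W}_y$, ample~$\otimes$~nef, which is the same final step. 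Note also that your route is genuinely fine for $r=2$, since then $-1\equiv1\bmod 2$ and the dual moduli space is again of ``degree one'' type; it is exactly the passage to $r\geq3$ that breaks it.
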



\begin{proof}
  By \cref{lemma:pullback} we get that~$\mathcal{O}_{Q(\mathcal{W},x)}(1)$ is nef, hence~$\mathcal{W}_x$ is nef. By \cite[proposition~6.2.12(iv)]{MR2095472} the exterior power~$\bigwedge^{r-1}\mathcal{W}_x$ is again nef, and so~$\bigwedge^i\mathcal{W}_x\otimes\Theta$ is ample. Using the ampleness of
  \begin{equation}
    \begin{aligned}
      \bigwedge^{r-1}\mathcal{W}_x\otimes\Theta
      &\cong\bigwedge^{r-1}\mathcal{W}_x\otimes\Theta^\vee\otimes\mathcal{L}^{\otimes 2} \\
      &\cong\bigwedge^{r-1}\mathcal{W}_x\otimes\det\mathcal{W}_x^\vee\otimes\Theta^{\otimes 2} \\
      &\cong\mathcal{W}_x^\vee\otimes\omega_{\moduli_C(r,\mathcal{L})}^\vee
    \end{aligned}
  \end{equation}
  we can conclude that
  \begin{equation}
    \begin{aligned}
      \mathcal{W}_x\otimes\mathcal{W}_y^\vee\otimes\omega_{\moduli_C(r,\mathcal{L})}^\vee
      &\cong\mathcal{W}_x\otimes\Theta\otimes\mathcal{W}_y^\vee\otimes\Theta \\
      &\cong\mathcal{W}_x\otimes\Theta\otimes\bigwedge^{r-1}\mathcal{W}_y
    \end{aligned}
  \end{equation}
  is again the tensor product of a nef bundle with an ample bundle, so is ample itself.
\end{proof}

From this we get the vanishing of~$\mathcal{W}_x\otimes\mathcal{W}_y^\vee$ in high degree.

\begin{corollary}
  \label{corollary:le-potier-applied}
  We have that for~$i\geq r^2$.
  \begin{equation}
    \HH^i(\moduli_C(r,\mathcal{L}),\mathcal{W}_x\otimes\mathcal{W}_y^\vee)=0
  \end{equation}
\end{corollary}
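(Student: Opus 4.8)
The plan is to deduce the statement directly from the Le Potier vanishing theorem for ample vector bundles, applied to the ample bundle produced in \cref{proposition:ample-vector-bundles}. Recall that this theorem (see e.g.~\cite[theorem~7.3.6]{MR2095472}) states that for an ample vector bundle~$\mathcal{E}$ of rank~$e$ on a smooth projective variety~$X$ of dimension~$n$ one has
\begin{equation}
  \HH^q(X,\Omega_X^p\otimes\mathcal{E})=0\qquad\text{whenever } p+q\geq n+e.
\end{equation}
The point of the proof is that the twist by~$\omega_{\moduli_C(r,\mathcal{L})}^\vee$ appearing in \cref{proposition:ample-vector-bundles} is exactly the twist needed to turn a top-degree application of Le Potier into a statement about~$\mathcal{W}_x\otimes\mathcal{W}_y^\vee$ itself.

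Concretely, I would set~$\mathcal{E}\coloneqq\mathcal{W}_x\otimes\mathcal{W}_y^\vee\otimes\omega_{\moduli_C(r,\mathcal{L})}^\vee$, which is ample by \cref{proposition:ample-vector-bundles} and has rank~$e=r^2$ since~$\mathcal{W}_x$ and~$\mathcal{W}_y^\vee$ each have rank~$r$ and~$\omega_{\moduli_C(r,\mathcal{L})}^\vee$ is a line bundle. Writing~$n=\dim\moduli_C(r,\mathcal{L})=(r^2-1)(g-1)$ and taking~$p=n$, so that~$\Omega_X^n\cong\omega_{\moduli_C(r,\mathcal{L})}$, the Le Potier vanishing theorem gives
\begin{equation}
  \HH^q(\moduli_C(r,\mathcal{L}),\omega_{\moduli_C(r,\mathcal{L})}\otimes\mathcal{E})=0\qquad\text{for } n+q\geq n+r^2,\text{ i.e. } q\geq r^2.
\end{equation}
It then remains to observe that~$\omega_{\moduli_C(r,\mathcal{L})}\otimes\mathcal{E}\cong\mathcal{W}_x\otimes\mathcal{W}_y^\vee$, the twists by~$\omega_{\moduli_C(r,\mathcal{L})}$ and~$\omega_{\moduli_C(r,\mathcal{L})}^\vee$ cancelling, which yields the claimed vanishing in degrees~$i\geq r^2$.

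There is essentially no hard computation here; the whole content has been front-loaded into the ampleness statement of \cref{proposition:ample-vector-bundles}. The one conceptual point worth flagging is that the threshold~$r^2$ is forced by the \emph{rank} of~$\mathcal{E}$ entering the Le Potier inequality, rather than by the dimension of~$\moduli_C(r,\mathcal{L})$. This is precisely why this argument only settles the high-degree range~$i\geq r^2$ and cannot be pushed down to~$i\geq 1$: the positivity of a rank-$r^2$ bundle controls cohomology only above its rank, so the remaining low-degree vanishing~$0\leq i<r^2$ genuinely requires the separate, more delicate argument carried out in \cref{section:low-degree-vanishing}.
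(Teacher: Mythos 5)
Your proof is correct and is essentially the paper's own argument: the paper likewise deduces the statement by applying Le Potier vanishing to the ample rank-$r^2$ bundle $\mathcal{W}_x\otimes\mathcal{W}_y^\vee\otimes\omega_{\moduli_C(r,\mathcal{L})}^\vee$ supplied by \cref{proposition:ample-vector-bundles}, in the equivalent form $\HH^i(X,\omega_X\otimes\mathcal{E})=0$ for $i\geq\operatorname{rank}\mathcal{E}$. Your specialisation $p=n$ of the $\Omega_X^p$-version of the theorem simply recovers that form, so the two proofs coincide.
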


\begin{proof}
  It suffices to apply Le Potier vanishing \cite[theorem~7.3.5]{MR2095472} to the ample vector bundle~$\mathcal{W}_x\otimes\mathcal{W}_y^\vee\otimes\omega_{\moduli_C(r,\mathcal{L})}^\vee$ of rank~$r^2$.
\end{proof}

\section{Cohomology vanishing for \texorpdfstring{$\mathcal{W}_x\otimes\mathcal{W}_y^\vee$}{Wx tensor Wy dual} in low degrees}
\label{section:low-degree-vanishing}
To finish the proof of part~(\ref{enumerate:fully-faithful-vanishing-3}) \cref{theorem:fully-faithful-vanishing} we need to show vanishing in degrees~$i\leq r^2-1$. For this we will use the morphism~$\psi$ from \eqref{equation:hecke}.

Because~$\deg\mathcal{L}^\vee\otimes\mathcal{O}_C(x)=0$ is not coprime with~$r$, there is a dense open subset
\begin{equation}
  \moduli_C^{\mathrm{s}}(r,\mathcal{L}^\vee\otimes\mathcal{O}_C(x))\subsetneq\moduli_C(r,\mathcal{L}^\vee\otimes\mathcal{O}_C(x))
\end{equation}
of stable vector bundles, the complement being the strictly semistable locus.

\begin{remark}
  \label{remark:01-stable}
  If we take a stable bundle~$\mathcal{V} \in \moduli_C(r,\mathcal{L}^\vee\otimes \mathcal{O}_C(x))$, then by \cite[remark~5.2(v) and lemma~5.6(i)]{MR541029} the Hecke transform of~$\mathcal{V}$ is also stable.
\end{remark}

Using \cref{remark:01-stable}, the proof of \cite[proposition~3.4]{MR3713871} also gives a proof of the following generalisation.
\begin{proposition}
  \label{proposition:restriction-to-fibre}
  The restriction of~$\psi$ to~$\moduli_C^{\mathrm{s}}(r,\mathcal{L}^\vee\otimes\mathcal{O}_C(x))$ is a~$\mathbb{P}^{r-1}$\dash fibration. Moreover, for every point~$y\in C\setminus\{x\}$ the restriction of~$\pi^*(\mathcal{W}_y)$ to a fibre of~$\psi$ in the stable locus is isomorphic to~$\mathcal{O}_{\mathbb{P}^{r-1}}(1)^{\oplus r}$.
\end{proposition}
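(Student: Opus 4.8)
The plan is to establish two things separately: that $\psi$ restricted to the stable locus is a $\mathbb{P}^{r-1}$\dash fibration, and that $\pi^*(\mathcal{W}_y)$ restricted to a fibre of $\psi$ in that locus is $\mathcal{O}_{\mathbb{P}^{r-1}}(1)^{\oplus r}$. For the first assertion, the key observation is that the Hecke correspondence admits a symmetric description: just as a point of $Q(\mathcal{W},x)$ over $E\in\moduli_C(r,\mathcal{L})$ is a surjection $E\twoheadrightarrow k(x)$, so too, starting from a stable bundle~$\mathcal{V}\in\moduli_C^{\mathrm{s}}(r,\mathcal{L}^\vee\otimes\mathcal{O}_C(x))$, the fibre $\psi^{-1}([\mathcal{V}])$ should parametrise the inverse Hecke modifications of~$\mathcal{V}$ at~$x$. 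First I would make this precise: a point of the fibre over~$\mathcal{V}$ corresponds to a line in the fibre $\mathcal{V}(x)$ (equivalently a surjection $\mathcal{V}^\vee\twoheadrightarrow k(x)$), so the fibre is naturally $\mathbb{P}(\mathcal{V}_x)\cong\mathbb{P}^{r-1}$. \Cref{remark:01-stable} guarantees that every such modification lands in the moduli space of stable bundles, so the whole $\mathbb{P}^{r-1}$ sits inside $Q(\mathcal{W},x)$ and maps isomorphically under~$\pi$ to its image; this is exactly what makes~$\psi$ a genuine fibration over the stable locus rather than merely a rational map.

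For the second assertion, I would identify the restriction of the universal bundle along such a fibre. Over the fibre $\mathbb{P}^{r-1}=\mathbb{P}(\mathcal{V}_x)$, there is a tautological exact sequence of sheaves on $C\times\mathbb{P}^{r-1}$ relating the pullback of~$\mathcal{V}$ (the fixed bundle being modified) to the universal family of Hecke modifications, with the modification concentrated at~$\{x\}$ and twisted by the tautological quotient line bundle~$\mathcal{O}_{\mathbb{P}^{r-1}}(1)$. Away from~$x$—and here is exactly where the hypothesis $y\neq x$ enters—this sequence is an isomorphism after restricting to $\{y\}\times\mathbb{P}^{r-1}$, so $\pi^*(\mathcal{W}_y)|_{\text{fibre}}$ differs from the constant bundle $\mathcal{V}_y\otimes\mathcal{O}_{\mathbb{P}^{r-1}}$ only by a twist coming from the modification at~$x$. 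Carefully tracking the normalisation~\eqref{equation:normalisation} of~$\mathcal{W}$ and the tautological twist, one computes that the restriction is $\mathcal{O}_{\mathbb{P}^{r-1}}(1)^{\oplus r}$: the rank~$r$ is inherited from~$\mathcal{V}_y$, and the uniform twist by~$1$ is dictated by how $\mathcal{O}_{Q(\mathcal{W},x)}(1)$ restricts to the fibre together with \cref{lemma:pullback}.

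The main obstacle I expect is the bookkeeping in the second assertion: making the identification of the twist precise requires carefully comparing the Poincar\'e bundle~$\mathcal{W}$ on $C\times\moduli_C(r,\mathcal{L})$, its pullback under~$\pi$, and the universal Hecke family over the fibre, while keeping the normalisation constant throughout. In the rank~$2$ case of \cite{MR3713871} this is straightforward because $\bigwedge^{r-1}=\bigwedge^1$ is the identity and the tautological sequence is short; in higher rank one must be attentive that the elementary modification at~$x$ genuinely decouples from the behaviour at~$y$, which is where stability (via \cref{remark:01-stable}) and the condition $x\neq y$ both play essential roles. My expectation, following the remark in the excerpt that ``the proof of \cite[proposition~3.4]{MR3713871} also gives a proof,'' is that the structural argument transfers verbatim once the fibrewise description above is in place, and that the only genuinely new content is verifying that the higher\dash rank tautological sequence restricts to the claimed split form $\mathcal{O}_{\mathbb{P}^{r-1}}(1)^{\oplus r}$ rather than a nonsplit or differently\dash twisted bundle.
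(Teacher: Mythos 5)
Your proposal is correct in outline, and its skeleton coincides with the paper's proof: identify the fibre $\psi^{-1}([\mathcal{V}])$ with $\mathbb{P}(V_x)$ via Hecke modifications of the stable bundle $\mathcal{V}$ (with \cref{remark:01-stable} guaranteeing that these modifications are stable), observe that the universal family of modifications is canonically trivial along $\{y\}\times\mathbb{P}(V_x)$ when $y\neq x$, and then pin down a residual line-bundle twist. Where you genuinely diverge from the paper is in how that twist is pinned down. The paper uses the universal property of the fine moduli space to write the pullback of $\mathcal{W}$ as the Hecke family $K(\mathcal{V})$ tensored with a line bundle $\xi$ pulled back from $\mathbb{P}(V_x)$, and then computes $f^*(\Theta)$ in two ways: from the formula $f^*(\omega_{\moduli_C(r,\mathcal{L})})\cong\omega_{\mathbb{P}(V_x)}^{\otimes 2}$ (cited from Narasimhan--Ramanan) and the index-$2$ Fano property, giving $f^*(\Theta)\cong\mathcal{O}_{\mathbb{P}(V_x)}(r)$; and from the normalisation $\det\mathcal{W}_y\cong\Theta$ plus triviality of $K(\mathcal{V})$ at $y$, giving $f^*(\Theta)\cong\xi^{\otimes r}$; comparing yields $\xi\cong\mathcal{O}_{\mathbb{P}(V_x)}(1)$. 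You instead propose to extract the twist from \cref{lemma:pullback}: since $\mathcal{O}_{Q(\mathcal{W},x)}(1)\cong\psi^*(\Theta')$ is trivial on every fibre of $\psi$, restricting the universal elementary-modification sequence on $C\times Q(\mathcal{W},x)$ to the fibre forces the twist to be $\mathcal{O}_{\mathbb{P}^{r-1}}(1)$. This route does work and has the merit of avoiding the canonical-bundle pullback formula entirely, whereas the paper's route keeps the fibrewise bookkeeping minimal by outsourcing the geometry to citations. However, two steps in your sketch must be made precise before it closes. First, your phrase that the restriction ``differs from the constant bundle only by a twist coming from the modification at $x$'' is conceptually off: the universal Hecke family itself \emph{is} constant at $y$; the twist arises either from the ambiguity in the universal property of the fine moduli space, or (if you work with the canonical modification sequence on $C\times Q(\mathcal{W},x)$, whose cokernel is the pushforward of $\mathcal{O}_{Q(\mathcal{W},x)}(1)$ along $\{x\}\times Q(\mathcal{W},x)$) from a see-saw argument that needs simplicity of the stable bundle $\mathcal{V}$ to write the restricted kernel family as $\mathcal{V}^\vee$ boxed with a single line bundle $\mathcal{N}$. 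Second, relating $\mathcal{N}$ to the triviality of $\mathcal{O}_{Q(\mathcal{W},x)}(1)$ on the fibre is exactly the Tor-term analysis at $x$: restricting the modification sequence to $\{x\}\times\mathbb{P}(V_x)$ produces a four-term sequence whose kernel is the tautological line of $\mathbb{P}(V_x)$ tensored with $\mathcal{N}$, and only after identifying that kernel line does triviality of $\mathcal{O}_{Q(\mathcal{W},x)}(1)$ on the fibre give $\mathcal{N}\cong\mathcal{O}_{\mathbb{P}^{r-1}}(1)$. This is the ``bookkeeping'' you deferred; it is a genuine (if standard) computation, not a formality, and it is the precise substitute for the paper's appeal to Narasimhan--Ramanan's canonical-bundle formula.
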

For completeness' sake, we give the proof.
\begin{proof}
  Consider a point~$m\in\moduli_C^{\mathrm{s}}(r,\mathcal{L}^\vee\otimes\mathcal{O}_C(x))$, which corresponds to a stable vector bundle~$\mathcal{V}$ on~$C$ of rank~$r$ and determinant~$\mathcal{L}^\vee\otimes\mathcal{O}_C(x)$. We wish to show that
  \begin{equation}
    \label{equation:desired-isomorphism}
    \pi^*(\mathcal{W}_y)|_{\psi^{-1}(m)}\cong\mathcal{O}_{\mathbb{P}^{r-1}}(1)^{\oplus r}.
  \end{equation}
  Let us denote~$V_x=\mathcal{V}_x\otimes k(x)$ the fibre of~$\mathcal{V}$ at~$x$. Then as in \cite[\S4]{MR0384797} we obtain a family of vector bundles~$K(\mathcal{V})$ on~$C$ parametrised by~$\mathbb{P}(V_x)\cong\mathbb{P}^{r-1}$, where~$T=\Spec k$ is just a point. It should be mentioned that we will also use results from \cite[\S5]{MR541029}, and it is important to highlight remark~5.7 of op.~cit., which explains how the results in \S5 of op.~cit.~are to be interpreted in the context of \cite[\S4]{MR0384797}, in particular we have~$x\in C$ fixed.

  As~$K(\mathcal{V})$ is a family of stable vector bundles of rank~$r$ and determinant~$\mathcal{L}$, the universal property of~$\moduli_C(r,\mathcal{L})$ gives us a morphism
  \begin{equation}
    f\colon\mathbb{P}(V_x)\to\moduli_C(r,\mathcal{L})
  \end{equation}
  which is a closed immersion by~\cite[lemma~5.9]{MR541029}, where in the setting of loc.~cit. we restricted the morphism to the point~$x\in C$. The same universal property gives us an isomorphism
  \begin{equation}
    \label{equation:universal-property}
    (f\times\identity_C)^*(\mathcal{W})\cong K(\mathcal{V})\otimes g^*(\xi)
  \end{equation}
  on~$\mathbb{P}(V_x)\times C$, for some line bundle~$\xi$ on~$\mathbb{P}(V_x)$, where~$g\colon\mathbb{P}(V_x)\times C\to\mathbb{P}(V_x)$ is the projection onto the first factor. We wish to determine the line bundle~$\xi$ more explicitly, and we do this by computing the restriction~$f^*(\Theta)$.

  First, by \cite[remark~5.7 and corollary~5.16]{MR541029} we get an isomorphism
  \begin{equation}
    \label{eqn:heckelemma}
    f^*(\omega_{\moduli_C(r,\mathcal{L})})\cong\omega_{\mathbb{P}(V_x)}^{\otimes 2},
  \end{equation}
  and as~$\moduli_C(r,\mathcal{L})$ is Fano of index~2 we get
  \begin{equation}
    f^*(\Theta)\cong\mathcal{O}_{\mathbb{P}(V_x)}(r).
  \end{equation}
  On the other hand, using \eqref{equation:universal-property} we get
  \begin{equation}
    \begin{aligned}
      f^*(\Theta)
      &\cong f^*(\det(\mathcal{W}_y)) \\
      &\cong\det(K(\mathcal{V})|_{\mathbb{P}(V_x)\times\{y\}}\otimes\xi).
    \end{aligned}
  \end{equation}
  But as~$K(\mathcal{V})|_{\mathbb{P}(V_x)\times\{y\}}\cong\mathcal{O}_{\mathbb{P}(V_x)}^{\oplus r}$ by \cite[remark~4.7]{MR0384797}, we get
  \begin{equation}
    f^*(\Theta)\cong\xi^{\otimes r}.
  \end{equation}
  Hence~$\xi\cong\mathcal{O}_{\mathbb{P}(V_x)}(1)$, and \eqref{equation:universal-property} gives
  \begin{equation}
    f^*(\mathcal{W}_y)\cong\mathcal{O}_{\mathbb{P}(V_x)}(1)^{\oplus r}.
  \end{equation}

  Finally, similar to \cite[\S5]{MR0384797} there exists a commutative diagram
  \begin{equation}
    \begin{tikzcd}
      & \mathbb{P}(\mathcal{W}_x) \arrow[d, "\pi"] \\
      \mathbb{P}(V_x) \arrow[r, hook, "f"] \arrow[ru, hook, "F"] & \moduli_C(r,\mathcal{L})
    \end{tikzcd}
  \end{equation}
  such that the image of~$F$ is identified with~$\psi^{-1}(m)$. But then \eqref{equation:desired-isomorphism} follows.
\end{proof}


We will need the following codimension estimate for the preimage under~$\psi$ for the strictly semistable locus. The proof is similar to that in \cite[\S 2.V.A]{MR699278}, but we will now fix the determinant.
\begin{lemma}
  \label{lemma:codimension}
  Denoting~$U\coloneqq\psi^{-1}(\moduli_C^{\mathrm{s}}(r,\mathcal{L}^\vee\otimes\mathcal{O}_C(x))$ we have that
  \begin{equation} 
    \codim_{Q(\mathcal{W},x)}(Q(\mathcal{W},x)\setminus U)\geq 2(r-1)(g-1)-1
  \end{equation}
\end{lemma}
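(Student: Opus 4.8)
The plan is to identify the bad locus and then reduce its codimension to a dimension count on the moduli space $M'\coloneqq\moduli_C(r,\mathcal{L}^\vee\otimes\mathcal{O}_C(x))$, which parametrises semistable bundles of rank~$r$ and degree~$0$. By construction $Q(\mathcal{W},x)\setminus U=\psi^{-1}(\Sigma)$, where $\Sigma\subset M'$ is the strictly semistable locus, i.e.\ the complement of the stable locus $\moduli_C^{\mathrm{s}}(r,\mathcal{L}^\vee\otimes\mathcal{O}_C(x))$. By \cref{proposition:restriction-to-fibre} the morphism $\psi$ is a $\mathbb{P}^{r-1}$\dash fibration over the stable locus, hence equidimensional of relative dimension $r-1$ there, and $\dim Q(\mathcal{W},x)=(r^2-1)(g-1)+(r-1)$. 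The first step is therefore to bound $\dim\Sigma$, and the second to propagate this bound across $\psi$.

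For the first step I would stratify $\Sigma$ according to the Jordan--Hölder type of the associated graded bundle, and compute the dimension of each stratum as a sum of three contributions: a Jacobian factor recording the determinants of the graded pieces, the moduli of the (stable) graded pieces themselves, and the parameters of the iterated extensions. Because we have fixed the determinant, a splitting into ranks $r_1,\dots,r_k$ contributes $g$ from the determinant choices and $\sum_i(r_i^2-1)(g-1)$ from the stable pieces, and the codimension is minimised by the coarsest nontrivial splitting, namely a degree-$0$ sub-line-bundle together with a complementary stable bundle of rank $r-1$ and prescribed determinant. This dominant stratum has dimension $g+((r-1)^2-1)(g-1)$, and subtracting from $\dim M'=(r^2-1)(g-1)$ gives codimension exactly $2(r-1)(g-1)-1$, while every finer Jordan--Hölder type yields a strictly larger codimension. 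This is precisely where fixing the determinant departs from \cite[\S2.V.A]{MR699278}: the Jacobian factor $g$ appears, and the dimension of a rank-$r_i$ piece drops from $r_i^2(g-1)+1$ to $(r_i^2-1)(g-1)$.

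The step I expect to be the main obstacle is transferring this estimate from $M'$ to $Q(\mathcal{W},x)$ along $\psi$, since $\psi$ is only a $\mathbb{P}^{r-1}$\dash fibration over the stable locus. Over a strictly semistable point the fibre can jump: the corresponding S-equivalence class contains a positive-dimensional family of pairwise non-isomorphic semistable bundles, namely the non-split iterated extensions of its Jordan--Hölder factors, and each of these contributes its own Hecke modifications. A naive ``$\dim\Sigma+(r-1)$'' bound therefore does not suffice, and the correct way to keep control, following \cite[\S2.V.A]{MR699278}, is to run the count directly on the Hecke correspondence rather than through the coarse moduli space. Concretely I would parametrise a point of $Q(\mathcal{W},x)\setminus U$ by the destabilising data of the kernel $\mathcal{E}_1=\ker(\mathcal{E}\twoheadrightarrow k(x))$, which after dualising has the fixed determinant $\mathcal{L}^\vee\otimes\mathcal{O}_C(x)$: a slope-$0$ sub-bundle $\mathcal{F}\subset\mathcal{E}_1$ of rank $r'$, the extension class realising $\mathcal{E}_1$ from $\mathcal{F}$ and its quotient, and the upward modification recovering the stable bundle $\mathcal{E}$ together with the quotient $\phi$. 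The delicate bookkeeping, and the place where the genus hypothesis $g\geq g_0$ and the coprimality $d=1$ are genuinely used, is to verify that the generic modification is indeed stable (so that the stratum has the expected dimension) and that this parametrisation is generically finite onto its image; the dominant stratum is again $r'=1$, and keeping the fibre jumps reconciled with the determinant constraint is exactly what is needed to arrive at the stated codimension.
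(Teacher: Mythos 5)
Your first step is the same as the paper's. The paper bounds the polystable locus $K\subset\moduli_C(r,\mathcal{L}^\vee\otimes\mathcal{O}_C(x))$ using exactly your dominant splitting $r=1+(r-1)$ (this is where $d=1$ enters: the Hecke transforms have degree $0$, so $r_0=1$ in the paper's notation), obtaining $\dim K=(r-1)^2(g-1)+1$, which is your $g+((r-1)^2-1)(g-1)$, i.e.\ codimension $2(r-1)(g-1)-1$ in the moduli space. The divergence is in the transfer across $\psi$, and there the paper's proof is precisely the bound you dismiss as insufficient: it asserts, as ``a generalisation of the proof of \cite[lemma~5]{MR699278}'', that \emph{every} fibre of $\psi$ --- including the fibres over strictly semistable points --- has dimension at most $r-1$, and then concludes $\dim\psi^{-1}(K)\leq\dim K+(r-1)$, which gives the lemma by arithmetic. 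You instead propose to stratify $Q(\mathcal{W},x)\setminus U$ directly by destabilising data of the kernel, but you explicitly defer the two verifications that would constitute the proof (that the generic modification is stable so the stratum has the expected dimension, and that the parametrisation is generically finite), so no inequality is ever established; as written this is a plan, not a proof. Incidentally, the hypothesis $g\geq g_0$ plays no role in this lemma --- it is used only later, in \cref{section:low-degree-vanishing}, when the codimension estimate is compared with $r^2-1$; what is genuinely used here is $d=1$.

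Moreover, the deferred bookkeeping is not routine, and carrying it out shows your route cannot reach the stated bound. Take $r=2$, so that $\dim Q(\mathcal{W},x)=(r^2-1)(g-1)+(r-1)=3g-2$. For any $M\in\Pic^0(C)$ and any non-split extension $0\to M\to\mathcal{E}\to\mathcal{L}\otimes M^\vee\to 0$, the bundle $\mathcal{E}$ is automatically stable (a destabilising line subbundle would map isomorphically onto $\mathcal{L}\otimes M^\vee$ and split the sequence), and the composite $\phi\colon\mathcal{E}\twoheadrightarrow\mathcal{E}/M\cong\mathcal{L}\otimes M^\vee\twoheadrightarrow k(x)$ has kernel containing the degree-$0$ subbundle $M$, so $\ker\phi$ is strictly semistable and $(\mathcal{E},[\phi])\in Q(\mathcal{W},x)\setminus U$. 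Since $\dim\Ext^1(\mathcal{L}\otimes M^\vee,M)=g$, this is a $(2g-1)$-dimensional family, and $(\mathcal{E},[\phi])$ determines $(M,[e])$ up to finite ambiguity (generically $M$ is the unique degree-$0$ line subbundle of $\ker\phi$), so
\begin{equation}
  \codim_{Q(\mathcal{W},x)}\left(Q(\mathcal{W},x)\setminus U\right)\leq(3g-2)-(2g-1)=g-1,
\end{equation}
which is strictly smaller than $2(g-1)-1$ as soon as $g\geq 3$. Equivalently, since for fixed $M$ all these pairs have the same image under $\psi$, the fibre of $\psi$ over a strictly semistable point is at least $(g-1)$-dimensional: the fibre jump you warned about genuinely occurs. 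So you have correctly located the crux --- the entire weight of the lemma rests on the pointwise fibre bound imported from \cite[lemma~5]{MR699278}, with which the computation above is in direct tension --- but identifying the obstruction is not overcoming it: your proposal neither proves that bound nor supplies a working substitute, and its own stratification, completed honestly, yields a codimension estimate that falls short of (and in fact contradicts) the claimed inequality. As a proof of \cref{lemma:codimension} it therefore fails.
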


\begin{proof}
  Let~$\mathcal{M}$ be a line bundle of degree~$m$. Recall that polystable bundles in
  \begin{equation}
    K\coloneqq\moduli_C(r,\mathcal{M})\setminus\moduli_C^{\mathrm{s}}(r,\mathcal{M})
  \end{equation}
  are direct sums of stable bundles with the same slope. If we denote~$a=\gcd(r,m)\geq 2$ and~$r_0=r/a$ we get
  \begin{equation}
    \begin{aligned}
      \dim K
      &=\max_{c=1,\ldots,\lfloor a/2\rfloor}\left( r_0^2(c^2+(a-c)^2(g-1)+2-g \right) \\
      &=(2r_0^2+r^2-2rr_0-1)(g-1)+1,
    \end{aligned}
  \end{equation}
  where we can choose the determinant for one of the summands freely.

  In the situation of \eqref{equation:hecke} we have~$d=1$, and hence~$r_0=1$. By a generalisation of the proof of \cite[lemma~5]{MR699278}, it follows that the fibers of the map~$\psi$ are at most~$(r-1)$\dash dimensional. Thus we see that the dimension of $\psi^{-1}(K)$ is at most~$(2+r^2-2r-1)(g-1)+1+(r-1)$. Hence we get that
  \begin{equation}
    \begin{aligned}
      &\codim_{Q(\mathcal{W},x)}(Q(\mathcal{W},x)\setminus U) \\
      &\qquad = (r^2-1)(g-1)+r-1 - \dim \psi^{-1}(K)\\
      &\qquad\geq  2(r-1)(g-1)-1.
    \end{aligned}
  \end{equation}
 \end{proof}

\begin{remark}
  If~$r=2$, it is known that~$\psi$ is surjective by \cite[lemma~7.3]{MR541029} and the inequality in the statement of \cref{lemma:codimension} is an equality.
\end{remark}

In \cref{corollary:le-potier-applied} we have shown the vanishing of~$\HH^i(\moduli_C(r,\mathcal{L}),\mathcal{W}_x\otimes\mathcal{W}_y^\vee)$ for~$i\geq r^2$. We can now show vanishing for~$i\leq r^2-1$.

\begin{proof}[Proof of part (\ref{enumerate:fully-faithful-vanishing-3}) of \cref{theorem:fully-faithful-vanishing}]
  By the Leray spectral sequence for~$\psi$ and \eqref{equation:pullback-ample-generator} we get that
  \begin{equation}
    \begin{aligned}
      \HH^i(\moduli_C(r,\mathcal{L}), \mathcal{W}_x\otimes \mathcal{W}_y^\vee)
      &\cong\HH^i(Q(\mathcal{W},x),\mathcal{O}_{Q(\mathcal{W},x)}(1)\otimes\pi^*(\mathcal{W}_y^\vee)) \\
      &\cong\HH^i(Q(\mathcal{W},x),\psi^*(\Theta')\otimes\pi^*(\mathcal{W}_y^\vee)).
    \end{aligned}
  \end{equation}
  Let~$g_0$ be the smallest integer such that
  \begin{equation}
    2rg_0-2(r+g_0)\geq r^2-1,
  \end{equation}
  where we have used the codimension estimate from \cref{lemma:codimension}. Then by \cite[\S III.3]{MR0476737} we have an inclusion
  \begin{equation}
    \HH^i(Q(\mathcal{W},x),\psi^*(\Theta')\otimes\pi^*(\mathcal{W}_y^\vee))
    \hookrightarrow
    \HH^i(U,\psi^*(\Theta')\otimes\pi^*(\mathcal{W}_y^\vee))
  \end{equation}
  for~$i\leq r^2-1$.

  By \cref{proposition:restriction-to-fibre} we have that~$\psi$ restricted to~$U$ is a~$\mathbb{P}^{r-1}$\dash fibration. Applying the projection formula
  \begin{equation}
    \RR^j\psi_*\left( \psi^*(\Theta')\otimes\pi^*(\mathcal{W}_y^\vee) \right)\cong\Theta\otimes\RR^j\psi_*(\pi^*(\mathcal{W}_y^\vee))
  \end{equation}
  and using that the fibers of~$\pi^*(\mathcal{W}_y^\vee)$ restricted to~$\psi^{-1}(p)$ for~$p\in\moduli_C^{\mathrm{s}}(r,\mathcal{L}^\vee\otimes\mathcal{O}_C(x))$ are isomorphic to~$\mathcal{O}_{\mathbb{P}^{r-1}}(-1)^{\oplus r}$ we are done.
\end{proof}

\section{Cohomology vanishing for \texorpdfstring{$\mathcal{W}_x^\vee$}{Wx dual} and \texorpdfstring{$\mathcal{W}_x\otimes\Theta^\vee$}{Wx tensor Theta dual}}
\label{section:sod-vanishing-1-2}
In this section we prove parts (1), (2) and (2') of \cref{theorem:sod-vanishing}. The proof goes along the same lines as the proof of part (\ref{enumerate:fully-faithful-vanishing-3}) of \cref{theorem:fully-faithful-vanishing}, and uses ingredients from \cref{section:high-degree-vanishing,section:low-degree-vanishing}.

To prove cohomology vanishing for~$\mathcal{W}_x^\vee$ and~$\mathcal{W}_x\otimes\Theta^\vee$ we prove the following more general vanishing statement for~$\bigwedge^j\mathcal{W}_x^\vee$ with~$j=1,\ldots,r-1$. Setting~$j=1$ and~$j=r-1$ then implies the result, as
\begin{equation}
  \bigwedge^{r-1}\mathcal{W}_x^\vee\cong\mathcal{W}_x\otimes\Theta^\vee
\end{equation}
because~$\det\mathcal{W}_x\cong\Theta$.

\begin{proposition}
  For any~$i\geq 0$ and for $j=1,\ldots,r-1$ we have that
  \begin{equation}
    \HH^i(\moduli_{C}(r,\mathcal{L}),\bigwedge^j\mathcal{W}_x^\vee)=0
  \end{equation}
  whenever~$g\geq g_j$, where~$g_j$ is the smallest positive integer satisfying the condition
  \begin{equation}
    2rg_j-2(r+g_j)\geq\binom{r}{j}-1.
  \end{equation}
\end{proposition}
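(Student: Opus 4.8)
The plan is to follow the two-step structure of \cref{section:high-degree-vanishing,section:low-degree-vanishing}, with the rank~$r^2$ of~$\mathcal{W}_x\otimes\mathcal{W}_y^\vee$ replaced throughout by the rank~$\binom{r}{j}$ of~$\bigwedge^j\mathcal{W}_x^\vee$. First I would establish the vanishing for~$i\geq\binom{r}{j}$ by combining an ampleness statement with Le Potier vanishing, and then treat the range~$i\leq\binom{r}{j}-1$ by means of the Hecke correspondence~\eqref{equation:hecke}. The inequality defining~$g_j$ is exactly what makes the codimension estimate of \cref{lemma:codimension} control cohomology up to degree~$\binom{r}{j}-1$, in the same way that~$g_0$ did for the bound~$r^2-1$.

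For the high-degree range, recall from \cref{lemma:pullback} that~$\mathcal{O}_{Q(\mathcal{W},x)}(1)\cong\psi^*\Theta'$ is nef, so~$\mathcal{W}_x$ is nef. Using~$\det\mathcal{W}_x\cong\Theta$ one rewrites
\begin{equation}
  \bigwedge^j\mathcal{W}_x^\vee\otimes\omega_{\moduli_C(r,\mathcal{L})}^\vee\cong\bigwedge^{r-j}\mathcal{W}_x\otimes\Theta,
\end{equation}
which is the tensor product of the nef bundle~$\bigwedge^{r-j}\mathcal{W}_x$ (exterior powers of nef bundles are nef by \cite[proposition~6.2.12(iv)]{MR2095472}) with the ample line bundle~$\Theta$, hence ample of rank~$\binom{r}{j}$. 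Le Potier vanishing \cite[theorem~7.3.5]{MR2095472} then yields~$\HH^i(\moduli_C(r,\mathcal{L}),\bigwedge^j\mathcal{W}_x^\vee)=0$ for~$i\geq\binom{r}{j}$, precisely as in \cref{corollary:le-potier-applied}.

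For the low-degree range, since~$\pi$ is a projective bundle there is an isomorphism~$\HH^i(\moduli_C(r,\mathcal{L}),\bigwedge^j\mathcal{W}_x^\vee)\cong\HH^i(Q(\mathcal{W},x),\pi^*\bigwedge^j\mathcal{W}_x^\vee)$. When~$g\geq g_j$ the estimate of \cref{lemma:codimension} gives~$\codim_{Q(\mathcal{W},x)}(Q(\mathcal{W},x)\setminus U)\geq\binom{r}{j}$, so by \cite[\S III.3]{MR0476737} restriction to~$U$ is injective on~$\HH^i$ for~$i\leq\binom{r}{j}-1$; it therefore suffices to prove~$\HH^i(U,\pi^*\bigwedge^j\mathcal{W}_x^\vee)=0$ in this range. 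Over the stable locus~$\psi$ is a~$\mathbb{P}^{r-1}$\dash fibration by \cref{proposition:restriction-to-fibre}, and I would compute~$\HH^i(U,-)$ by pushing forward along~$\psi$.

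The crucial input, and the main obstacle, is the restriction of~$\pi^*\mathcal{W}_x$ to a fibre of~$\psi$. Carrying out the computation of \cref{proposition:restriction-to-fibre} at the point~$x$ itself — where~$K(\mathcal{V})|_{\mathbb{P}(V_x)\times\{x\}}$ is no longer the trivial bundle~$\mathcal{O}^{\oplus r}$ but the nontrivial restriction at the modification point — one finds
\begin{equation}
  \pi^*\mathcal{W}_x|_{\psi^{-1}(m)}\cong\mathcal{O}_{\mathbb{P}^{r-1}}\oplus T_{\mathbb{P}^{r-1}},
\end{equation}
the trivial summand being the restriction~$\mathcal{O}_{Q(\mathcal{W},x)}(1)|_{\psi^{-1}(m)}\cong\psi^*\Theta'|_{\psi^{-1}(m)}\cong\mathcal{O}_{\mathbb{P}^{r-1}}$ of the tautological quotient. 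Hence~$\bigwedge^j(\pi^*\mathcal{W}_x^\vee)$ restricts on each fibre to~$\Omega^j_{\mathbb{P}^{r-1}}\oplus\Omega^{j-1}_{\mathbb{P}^{r-1}}$. In contrast with \cref{section:low-degree-vanishing}, where the fibrewise restriction~$\mathcal{O}_{\mathbb{P}^{r-1}}(-1)^{\oplus r}$ was acyclic and~$\RRR\psi_*$ vanished outright, Bott's formula shows that this bundle has one-dimensional cohomology in the two degrees~$j-1$ and~$j$. Thus~$\RRR\psi_*(\pi^*\bigwedge^j\mathcal{W}_x^\vee)$ is a genuinely nonzero two-term complex, concentrated in degrees~$j-1$ and~$j$, whose cohomology sheaves are powers of~$\Theta'$. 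The hard part will be to show that its hypercohomology over the stable locus nonetheless vanishes for~$i\leq\binom{r}{j}-1$: since the individual cohomology sheaves do have nonvanishing cohomology (for instance global sections of positive powers of the ample~$\Theta'$), the cancellation must come from the connecting maps in the Leray spectral sequence for~$\psi$, i.e.~from the extension class assembling the two-term complex. Making this precise seems to require vanishing of~$\Theta'$\dash twisted cohomology on~$\moduli_C(r,\mathcal{L}^\vee\otimes\mathcal{O}_C(x))$, together with a further use of the codimension estimate to compare the stable locus with the full, singular, moduli space.
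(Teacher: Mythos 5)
Your high-degree argument (nefness of~$\mathcal{W}_x$, the isomorphism~$\bigwedge^j\mathcal{W}_x^\vee\otimes\omega_{\moduli_C(r,\mathcal{L})}^\vee\cong\bigwedge^{r-j}\mathcal{W}_x\otimes\Theta$, Le Potier) is exactly the paper's. The gap is in the low-degree range, and it comes from a single choice: you run the Hecke correspondence at the \emph{same} point~$x$ at which the bundle~$\mathcal{W}_x$ is restricted. As you correctly observe, \cref{proposition:restriction-to-fibre} then no longer applies (it requires~$y\neq x$), and the fibrewise restriction becomes~$\mathcal{O}_{\mathbb{P}^{r-1}}\oplus T_{\mathbb{P}^{r-1}}$ (your splitting is in fact correct, since the relevant extension class lies in~$\HH^1(\mathbb{P}^{r-1},T_{\mathbb{P}^{r-1}})=0$), so that~$\bigwedge^j\pi^*(\mathcal{W}_x^\vee)$ restricts to~$\Omega^j_{\mathbb{P}^{r-1}}\oplus\Omega^{j-1}_{\mathbb{P}^{r-1}}$, which by Bott has cohomology in degrees~$j-1$ and~$j$. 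At that point you are left with a two-row Leray spectral sequence whose rows individually have plenty of nonzero cohomology, and you explicitly defer the required cancellation of differentials to an unproven ``hard part''. As it stands this is not a proof: nothing in your proposal controls those~$d_2$ differentials, and the codimension estimate alone cannot do it.

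The paper sidesteps this entirely with one observation you missed: the projective bundle used to compute the cohomology need not be~$Q(\mathcal{W},x)$. Since~$\pi\colon Q(\mathcal{W},z)\to\moduli_C(r,\mathcal{L})$ is a projective bundle for \emph{any} point~$z\in C$, one has
\begin{equation}
  \HH^i(\moduli_C(r,\mathcal{L}),\bigwedge^j\mathcal{W}_x^\vee)\cong\HH^i(Q(\mathcal{W},z),\bigwedge^j\pi^*(\mathcal{W}_x^\vee))
\end{equation}
for any auxiliary point~$z\in C\setminus\{x\}$. With this choice, \cref{lemma:codimension} and \cite[\S III.3]{MR0476737} give the same inclusion into~$\HH^i(U,-)$ for~$i<\binom{r}{j}$ (this is where~$g\geq g_j$ enters), and now \cref{proposition:restriction-to-fibre} applies verbatim with~$y=x$: the restriction of~$\pi^*(\mathcal{W}_x^\vee)$ to each fibre~$\psi^{-1}(p)$, $p\in\moduli_C^{\mathrm{s}}(r,\mathcal{L}^\vee\otimes\mathcal{O}_C(z))$, is~$\mathcal{O}_{\mathbb{P}^{r-1}}(-1)^{\oplus r}$, so~$\bigwedge^j\pi^*(\mathcal{W}_x^\vee)$ restricts to a direct sum of line bundles of degree~$-j$ with~$1\leq j\leq r-1$, which are acyclic on~$\mathbb{P}^{r-1}$. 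Hence~$\RR^k\psi_*\bigl(\bigwedge^j\pi^*(\mathcal{W}_x^\vee)\bigr)=0$ for all~$k$, and the vanishing follows with no spectral-sequence analysis at all. Your argument becomes the paper's proof after replacing~$Q(\mathcal{W},x)$ by~$Q(\mathcal{W},z)$; without that replacement, the proposal is incomplete.
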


\begin{proof}
  The strategy of the proof is similar to the proof of part (\ref{enumerate:fully-faithful-vanishing-3}) of \cref{theorem:fully-faithful-vanishing}. First we claim that~$\bigwedge^j\mathcal{W}_x^\vee\otimes \omega_{\moduli_C(r,\mathcal{L})}$ is ample. This follows from the isomorphism
  \begin{equation}
    \bigwedge^j\mathcal{W}_x^\vee\otimes\omega_{\moduli_C(r,\mathcal{L})}\cong\bigwedge^{r-j}\mathcal{W}_x\otimes \Theta
  \end{equation}
  and the fact that~$\mathcal{W}_x$ is nef, using \cref{lemma:pullback}, as in the proof of \cref{proposition:ample-vector-bundles}

  Thus by Le Potier vanishing \cite[theorem~7.3.5]{MR2095472} we get that
  \begin{equation}
    \HH^i(\moduli_C(r,\mathcal{L}), \bigwedge^{j}\mathcal{W}^\vee_x)=0
  \end{equation}
  for~$i \geq\binom{r}{j}$, as in \cref{section:high-degree-vanishing}. Thus now we only have to prove vanishing when $i<\binom{r}{j}$.

  As in \cref{section:low-degree-vanishing} the Leray spectral sequence tells us
  \begin{equation}
    \HH^i(\moduli_{C}(r,\mathcal{L}),\bigwedge^j\mathcal{W}_x^\vee)\cong\HH^i(Q(\mathcal{W},z),\bigwedge^j\pi^*(\mathcal{W}_x^\vee),
  \end{equation}
  where~$z$ is any point on~$C\setminus\{x\}$.

  Assuming that~$g\geq g_j$ and using the codimension estimate of \cref{lemma:codimension} together with~\cite[\S III.3]{MR0476737}, we obtain an inclusion
  \begin{equation}
    \HH^i(Q(\mathcal{W},z),\bigwedge^j\pi^*(\mathcal{W}_x^\vee)\subseteq\HH^i(U,\bigwedge^{j}\pi^*(\mathcal{W}_x^\vee)
  \end{equation}
  for any~$i<\binom{r}{j}$ and~$U$ as in \cref{lemma:codimension}. Now by \cref{proposition:restriction-to-fibre}, we get that~$\psi$ restricted to~$U$ is a~$\mathbb{P}^{r-1}$\dash fibration.

  Moreover by \cref{proposition:restriction-to-fibre}, we see that the fiber of~$\pi^*(\mathcal{W}_x^\vee)$ restricted to~$\psi^{-1}(p)$ for~$p \in \moduli_{C}^{\mathrm{s}}(r,\mathcal{L}^\vee\otimes \mathcal{O}_C(z))$ is isomorphic to~$\mathcal{O}_{\mathbb{P}^{r-1}}(-1)^{\oplus r}$. In particular, the fibers of~$\bigwedge^j\pi^*(\mathcal{W}_x^\vee)$ restricted to~$\psi^{-1}(p)$ are direct sums of line bundles of degree~$-j$. This implies that~$\RR^k\psi_*(\bigwedge^j\pi^*(\mathcal{W}_x^\vee))$ is zero, hence we are done.
\end{proof}

\section{Cohomology vanishing for \texorpdfstring{$\mathcal{W}_x^\vee\otimes\mathcal{W}_y\otimes\Theta^\vee$}{Wx dual tensor Wy tensor Theta dual}}
\label{section:sod-vanishing-3}
In this section we discuss the vanishing result of part (3) of \cref{theorem:sod-vanishing} required for the proof of \cref{theorem:sod}. We will recall the notion of~$k$\dash ample vector bundles, as the role of Le Potier vanishing in the proof of \cref{theorem:fully-faithful-vanishing} will be replaced by Sommese vanishing.

\begin{definition}
  Let~$X$ be a projective variety. A line bundle~$\mathcal{L}$ is said to be \emph{$k$\dash ample} if
	\begin{enumerate}
    \item some power of~$\mathcal{L}$ is globally generated, i.e.~$\mathcal{L}$ is semi-ample;
		\item the fibers of the morphism
      \begin{equation}
        X \rightarrow \mathbb{P}(\HH^0(X,\mathcal{L}^{\otimes r})^\vee)
      \end{equation}
      have dimension at most~$k$.
	\end{enumerate}
  A vector bundle~$\mathcal{E}$ on~$X$ is said to be \emph{$k$\dash ample} if the line bundle~$\mathcal{O}_{\mathbb{P}(\mathcal{E})}(1)$ is~$k$\dash ample.
\end{definition}

The notion of~$0$\dash ampleness agrees with ampleness and furthermore~$k$\dash ampleness implies~$(k+1)$\dash ampleness.

\begin{proposition}
  \label{proposition:k-ample}
  Let~$C$ be a smooth projective curve of genus~$g\geq 2$ and~$\mathcal{L}$ a line bundle of degree~$1$. Let~$\mathcal{W}$ be the normalised Poincar\'e bundle on~$C\times\moduli_C(r,\mathcal{L})$. Then~$\mathcal{W}_x$ is~$(r-1)$\dash ample.
\end{proposition}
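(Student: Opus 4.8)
The plan is to reduce the statement to a fibre-dimension property of the Hecke contraction~$\psi$ from~\eqref{equation:hecke}. By \cref{lemma:pullback} we have~$\mathcal{O}_{\mathbb{P}(\mathcal{W}_x)}(1)=\mathcal{O}_{Q(\mathcal{W},x)}(1)\cong\psi^*(\Theta')$, so by the very definition of~$k$\dash ampleness for a vector bundle, proving that~$\mathcal{W}_x$ is~$(r-1)$\dash ample amounts to proving that the line bundle~$\psi^*(\Theta')$ on~$Q(\mathcal{W},x)$ is~$(r-1)$\dash ample. Semi-ampleness of~$\psi^*(\Theta')$ is immediate: since~$\Theta'$ is ample, some power~$\Theta'^{\otimes m}$ is globally generated, and the pullback of a globally generated line bundle is globally generated, so~$\psi^*(\Theta'^{\otimes m})$ is globally generated. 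It therefore only remains to bound the fibre dimension of the morphism attached to a suitable power.

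First I would fix~$m\gg 0$ so that~$\Theta'^{\otimes m}$ is very ample on~$\moduli_C(r,\mathcal{L}^\vee\otimes\mathcal{O}_C(x))$, giving a closed embedding
\[
  \moduli_C(r,\mathcal{L}^\vee\otimes\mathcal{O}_C(x))\hookrightarrow\mathbb{P}\bigl(\HH^0(\moduli_C(r,\mathcal{L}^\vee\otimes\mathcal{O}_C(x)),\Theta'^{\otimes m})^\vee\bigr).
\]
Pulling sections back along~$\psi$ realises~$\psi^*\HH^0(\moduli_C(r,\mathcal{L}^\vee\otimes\mathcal{O}_C(x)),\Theta'^{\otimes m})$ as a base-point-free linear subsystem of the complete system~$\HH^0(Q(\mathcal{W},x),\psi^*(\Theta')^{\otimes m})$, and the morphism attached to this subsystem is exactly the composite of~$\psi$ with the embedding above. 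Since passing to a larger base-point-free linear system only refines the fibres — the two morphisms differ by the linear projection induced by restricting functionals — the fibres of the morphism attached to the complete system~$|\psi^*(\Theta')^{\otimes m}|$, which is the morphism appearing in the definition of~$k$\dash ampleness, are contained in the fibres of~$\psi$. In particular I never need to identify~$\psi_*\mathcal{O}_{Q(\mathcal{W},x)}$ with the structure sheaf of the base, which avoids any normality subtlety.

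It then suffices to show that every fibre of~$\psi$ has dimension at most~$r-1$. Over the stable locus this is \cref{proposition:restriction-to-fibre}, where the fibres are honest copies of~$\mathbb{P}^{r-1}$; over the strictly semistable locus the required bound is precisely the estimate established inside the proof of \cref{lemma:codimension} via the generalisation of \cite[lemma~5]{MR699278}. Combining the two cases shows that all fibres of the morphism have dimension at most~$r-1$, so~$\psi^*(\Theta')$ is~$(r-1)$\dash ample and hence~$\mathcal{W}_x$ is~$(r-1)$\dash ample. I expect the only genuine obstacle to be this final point, namely controlling the fibre dimension of~$\psi$ over the strictly semistable locus, where~$\psi$ is no longer a projective bundle; everything else is formal. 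Since the generic fibre of~$\psi$ has dimension exactly~$r-1$, the bound is sharp, so one cannot expect~$\mathcal{W}_x$ to be~$(r-2)$\dash ample in general, which is consistent with the replacement of Le Potier vanishing by Sommese vanishing in the sequel.
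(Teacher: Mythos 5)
Your proposal is correct and is essentially the paper's own proof: both reduce, via \cref{lemma:pullback}, to showing that~$\psi^*(\Theta')$ is~$(r-1)$\dash ample, with the key input being the bound (from the generalisation of \cite[lemma~5]{MR699278}, as used in \cref{lemma:codimension}) that all fibres of~$\psi$ have dimension at most~$r-1$. The only difference is that where you verify by hand that pulling back an ample line bundle along a morphism with fibres of dimension at most~$k$ gives a~$k$\dash ample line bundle (your linear-projection argument), the paper simply cites \cite[example~6.2.19(ii)]{MR2095472} for this fact.
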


\begin{proof}
  By the generalisation of \cite[lemma~5]{MR699278} we know that the fibres of the morphism~$\psi$ in the Hecke correspondence of \eqref{equation:hecke} are at most~$(r-1)$\dash dimensional. Moreover, we have already (see \ref{equation:pullback-ample-generator}) shown that
  \begin{equation}
    \mathcal{O}_{Q(\mathcal{W},x)}(1)\cong\Theta',
  \end{equation}
	where~$\Theta'$ is the ample generator of the Picard group of~$\moduli_C(r,\mathcal{L}^\vee\otimes \mathcal{O}_{C(x)})$. Since~$\Theta'$ is~(0-)ample, we get by \cite[example~6.2.19(ii)]{MR2095472}) that~$\mathcal{O}_{Q(\mathcal{W},x)}(1)$ is~$(r-1)$\dash ample. In particular this implies that $\mathcal{W}_x$ is $(r-1)$-ample.
\end{proof}

We will use this to prove part (3) of \cref{theorem:sod-vanishing}.

\begin{proof}[Proof of part (3) of \cref{theorem:sod-vanishing}]
  Let~$x$ and~$y$ be any two points of~$C$, not necessarily distinct, and denote
  \begin{equation}
    \mathcal{V}\coloneqq\mathcal{W}_y\otimes \mathcal{W}^\vee_x \otimes \Theta.
  \end{equation}
  By \cref{proposition:k-ample} we know that~$\mathcal{W}_x$ and~$\mathcal{W}_y$ are~$(r-1)$\dash ample. Now by \cite[theorem~3.5]{1607.07193v1} we get that both~$\bigwedge^{r-1}\mathcal{W}_x$ and~$\mathcal{W}_y\otimes \bigwedge^{r-1} \mathcal{W}_x$ are~$(r-1)$\dash ample. By our choice of normalisation of~$\mathcal{W}$ there is a natural isomorphism
  \begin{equation}
    \mathcal{V} \cong\mathcal{W}_y\otimes \mathcal{W}^\vee_x\otimes \Theta \cong  \mathcal{W}_y\otimes\bigwedge^{r-1}\mathcal{W}_x.
  \end{equation}
  Thus, we can conclude that the vector bundle~$\mathcal{V}$ is~$(r-1)$\dash ample. It follows by Sommese vanishing \cite[proposition~1.14]{MR0466647}, that
  \begin{equation}
    \label{equation:sommese-vanishing}
    \HH^i(\moduli_C(r,\mathcal{L}),\mathcal{W}_y\otimes \mathcal{W}_x^\vee\otimes \Theta^\vee)=0
  \end{equation}
  for all~$i \geq r^2+(r-1)$.

  Since~$\omega^\vee_{\moduli_C(r,\mathcal{L})}$ is isomorphic to~$\Theta^{\otimes 2}$ we get by Serre duality that
  \begin{equation}
    \label{equation:serredualitytrick}
    \HH^i(\moduli_C(r,\mathcal{L}),\mathcal{W}_y\otimes \mathcal{W}_x^\vee\otimes \Theta^\vee)^\vee\cong\HH^{(r^2-1)(g-1)-i}(\moduli_C(r,\mathcal{L}),\mathcal{W}_x\otimes \mathcal{W}_y^\vee\otimes \Theta^\vee).
  \end{equation}
  The same argument shows that
  \begin{equation}
    \label{serrevanishintrick}
    \HH^{(r^2-1)(g-1)-i}(\moduli_C(r,\mathcal{L}),\mathcal{W}_x\otimes \mathcal{W}_y^\vee\otimes \Theta^\vee)=0
   \end{equation}
  when~$(r^2-1)(g-1)-i \geq r^2+(r-1)$.

  Combining the above with \eqref{equation:sommese-vanishing}, we get that if~$(r^2-1)(g-1)\geq 2(r^2+r-1)$, then the cohomology~$\HH^i(\moduli_C(r,\mathcal{L}),\mathcal{V})$ vanishes for all~$i$. But this is satisfied for all~$g\geq 4$.
\end{proof}

\section{Concluding remarks}
\label{section:remarks}
\paragraph{The proofs of \cref{theorem:fully-faithful,theorem:sod}}
We can now explain how the vanishing theorems imply the fully faithfulness of the Fourier--Mukai functor, and how they give the semiorthogonal decomposition of \eqref{equation:sod}.

\begin{proof}[Proof of \cref{theorem:fully-faithful}]
  The Bondal--Orlov criterion from \cref{proposition:bondal-orlov} can be applied using \cref{theorem:fully-faithful-vanishing}.
\end{proof}

For the proof of \cref{theorem:sod} we need to use the following lemma.
\begin{lemma}
  \label{lemma:orthogonal-via-spanning-classes}
  Let~$X$ be a smooth projective variety. Let~$F\colon\mathcal{A}\hookrightarrow\derived^\bounded(X)$ and~$G\colon\mathcal{B}\hookrightarrow\derived^\bounded(X)$ be admissible embeddings. To check that~$\mathcal{A}$ is in the right orthogonal to~$\mathcal{B}$, it suffices to check this for spanning classes for~$\mathcal{A}$ and~$\mathcal{B}$, i.e.~whether
  \begin{equation}
    \Hom_{\derived^\bounded(X)}(G(T),F(S))=0
  \end{equation}
  for all objects~$S$ in a spanning class for~$\mathcal{A}$ and all objects~$T$ in a spanning class for~$\mathcal{B}$.
\end{lemma}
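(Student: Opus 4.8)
The plan is to deduce the vanishing of $\Hom_{\derived^\bounded(X)}(G(B),F(A))$ for \emph{all} objects $A$ of $\mathcal{A}$ and $B$ of $\mathcal{B}$ from the hypothesis by a two-step adjunction argument, transporting the $\Hom$ first into $\mathcal{A}$ and then into $\mathcal{B}$, where at each stage the defining property of a spanning class---that it detects the zero object through vanishing of graded $\Hom$'s---can be applied. The only input beyond the hypothesis is that $F$ and $G$, being admissible, each admit a left and a right adjoint; I will use the left adjoint $F^{\mathrm{L}}$ of $F$ and the right adjoint $G^{\mathrm{R}}$ of $G$. Throughout, a vanishing ``$\Hom=0$'' is read in all degrees, i.e.~together with all shifts, which is precisely what the spanning-class criterion tests and what the vanishing statements of \cref{theorem:sod-vanishing} supply.

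First I would fix an arbitrary $B$ in $\mathcal{B}$ and put $M\coloneqq F^{\mathrm{L}}(G(B))\in\mathcal{A}$. The adjunction $F^{\mathrm{L}}\dashv F$ gives $\Hom_{\mathcal{A}}(M,A)\cong\Hom_X(G(B),F(A))$ for every $A$, so the vanishing of $\Hom_X(G(B),F(A))$ for all $A$ is equivalent to $M\cong 0$. By the spanning-class property of $\mathcal{A}$ it suffices to check $\Hom_{\mathcal{A}}(M,S[i])=0$ for all $S$ in a spanning class $\Omega_{\mathcal{A}}$ and all $i$; the same adjunction rewrites this as $\Hom_X(G(B),F(S)[i])$. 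This reduces the claim to the vanishing of $\Hom_X(G(B),F(S)[i])$ for all $B\in\mathcal{B}$, all $S\in\Omega_{\mathcal{A}}$ and all $i$.

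Next I would fix such an $S$ together with a shift, and put $N\coloneqq G^{\mathrm{R}}(F(S)[i])\in\mathcal{B}$. The adjunction $G\dashv G^{\mathrm{R}}$ gives $\Hom_{\mathcal{B}}(B,N)\cong\Hom_X(G(B),F(S)[i])$, so vanishing for all $B$ is equivalent to $N\cong 0$. By the spanning-class property of $\mathcal{B}$ it suffices to check $\Hom_{\mathcal{B}}(T,N[j])=0$ for all $T$ in a spanning class $\Omega_{\mathcal{B}}$ and all $j$, and unwinding the adjunction once more (using that $G^{\mathrm{R}}$ commutes with shifts) turns this into $\Hom_X(G(T),F(S)[i+j])=0$. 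As $i$ and $j$ range over all integers this is exactly the hypothesis, and the argument is complete.

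The step I expect to require the most care is conceptual rather than computational: one is \emph{not} allowed to observe that the composite exact functor $G^{\mathrm{R}}\circ F$ kills every object of $\Omega_{\mathcal{A}}$ and conclude it vanishes identically, since a spanning class need not generate the category under cones and shifts. The role of interleaving the two adjunctions is that at each stage the object being tested ($M$, then $N$) is probed for vanishing \emph{against} a spanning class, which is the situation the spanning-class definition is built to handle; treating the $\mathcal{A}$- and $\mathcal{B}$-variables therefore forces a double application of the criterion. The remaining bookkeeping to verify is simply that the two families of shifts $[i]$ and $[j]$ together recover $\Hom_X(G(T),F(S)[m])=0$ for every $m\in\mathbb{Z}$, the form in which the required vanishing is actually established.
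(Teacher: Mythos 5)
Your proof is correct and is essentially the paper's own argument: both use the adjunctions $F^{\mathrm{L}}\dashv F$ and $G\dashv G^{\mathrm{R}}$ to transport the $\Hom$ into $\mathcal{A}$ and $\mathcal{B}$, and apply the zero-detection property of a spanning class once in each category. The only difference is cosmetic — you collapse the $\mathcal{A}$-variable first and then the $\mathcal{B}$-variable, while the paper does it in the opposite order — and your explicit bookkeeping of the shifts $[i]$, $[j]$ is a slightly more careful rendering of what the paper handles by declaring spanning classes closed under shifts.
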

For sake of notational simplicity a spanning class will be closed under shifts.
\begin{proof}
  We need to check that
  \begin{equation}
    \Hom_{\derived^\bounded(X)}(G(B),F(A))=0
  \end{equation}
  for all~$A\in\mathcal{A}$ and~$B\in\mathcal{B}$. Applying the adjunction~$G\dashv G^{\mathrm{R}}$ this is equivalent to~$G^{\mathrm{R}}\circ F(A)$ being isomorphic to~0 for all~$A$. This in turn is implied by
  \begin{equation}
    \Hom_{\mathcal{B}}(T,G^{\mathrm{R}}\circ F(A))=0,
  \end{equation}
  where~$T$ runs over a spanning class for~$\mathcal{B}$. Now applying~$G\dashv G^{\mathrm{R}}$ and~$F^{\mathrm{L}}\dashv F$, this is equivalent to
  \begin{equation}
    \Hom_{\mathcal{A}}(F^{\mathrm{L}}\circ G(T),A)=0,
  \end{equation}
  which is equivalent to~$F^{\mathrm{L}}\circ G(T)$ being isomorphic to zero for all~$T$. This in turn is implied by
  \begin{equation}
    \Hom_{\mathcal{A}}(F^{\mathrm{L}}\circ G(T),S)=0,
  \end{equation}
  where~$S$ runs over a spanning class for~$\mathcal{A}$.
\end{proof}

\begin{proof}[Proof of \cref{theorem:sod}]
  The orthogonality criterion from \cref{lemma:orthogonal-via-spanning-classes} can be applied to the images of~$\derived^\bounded(C)$ under~$\Phi_{\mathcal{W}}$ and~$\Phi_{\mathcal{W}}\otimes\Theta^\vee$ by using the spanning class given by the skyscrapers, and using part~(3) of \cref{theorem:sod-vanishing}.

  The other orthogonality checks (there are~5~more) follow from Kodaira vanishing, and parts~(1) and~(2) of \cref{theorem:sod-vanishing}.
\end{proof}

\paragraph{On \cref{theorem:sod} for rank 2 and genus 2}
It is expected that in \cref{theorem:fully-faithful} the condition on the genus is not essential, and combining \cite[remark~5]{MR3713871} with \cite[theorem~1.1]{MR3764066} we know that the functor~$\Phi_{\mathcal{W}}$ is fully faithful for~$r=2$ and all~$g\geq 2$, so a posteriori we can conclude the vanishing results in \cref{theorem:fully-faithful-vanishing}.

But when~$r=2$ and~$g=2$ it is shown in \cite[theorem~2.9]{alg-geom/9506012} that \eqref{equation:start-of-sod} is the whole semiorthogonal decomposition, i.e.
\begin{equation}
  \derived^\bounded(\moduli_C(2,\mathcal{L}))=\left\langle
  \Theta^\vee,\mathcal{O}_{\moduli_C(2,\mathcal{L})},\Phi_{\mathcal{W}}(\derived^\bounded(C))
  \right\rangle.
\end{equation}
In particular, \cref{theorem:sod} cannot hold for~$r=2$ and~$g=2$, so we cannot have the vanishing result in part~3 of \cref{theorem:sod-vanishing}. We leave it to the interested reader to compute directly on the intersection~$Q_1\cap Q_2$ of~2~smooth~quadrics in~$\mathbb{P}^5$ that the sheaf cohomology of the tensor product of the restriction of (dual) spinor bundles twisted by~$\mathcal{O}_{Q_1\cap Q_2}(-1)$ is non-zero.

On the other hand, for all other combinations of rank and genus (and degree) it is expected that \cref{theorem:sod} holds.

\paragraph{Generalised Picard bundles}
The fully faithfulness result from \cref{theorem:fully-faithful} allows us to reprove known results on the inversion of generalised Picard bundles, and their deformation theory, originally proven in \cite{MR1309332,MR2193340}. This is remarked upon in \cite[remark~1]{MR3713871} in the case when~$r=2$. We will now give some details in the more general case here.

\begin{definition}
  \label{definition:generalised-picard}
  Let~$\mathcal{E}$ be a semistable vector bundle of rank~$n$ and degree~$e$, such that
  \begin{equation}
    re+n>rn(2g-2).
  \end{equation}
  Then the Fourier--Mukai transform~$\Phi_{\mathcal{W}}(\mathcal{E})$ is again a vector bundle, of rank~$re+n+rn(1-g)$, called a \emph{generalised Picard bundle}.
\end{definition}

The first result that follows from \cref{theorem:fully-faithful} is \cite[theorem~19]{MR2193340}. Recall that the kernel~$\mathcal{W}^{\mathrm{R}}$ for the right adjoint of~$\Phi_{\mathcal{W}}$ is given by~$\mathcal{W}^\vee\otimes p_1^*(\omega_C)[1]$. By fully faithfulness we have a natural equivalence~$\Phi_{\mathcal{W}^{\mathrm{R}}}\circ\Phi_{\mathcal{W}}\cong\identity_{\derived^\bounded(C))}$.
\begin{proposition}[Inversion formula]
  Let~$\mathcal{E}$ be a vector bundle as in \cref{definition:generalised-picard}. Then there exists an isomorphism
  \begin{equation}
    \mathcal{E}\cong\RR^1p_{1,*}\left( p_2^*(p_{2,*}(p_1^*(\mathcal{E})\otimes\mathcal{W})\otimes\mathcal{W}^\vee\otimes p_1^*(\omega_C) \right).
  \end{equation}
\end{proposition}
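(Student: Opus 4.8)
The plan is to recognise the right-hand side as the composite Fourier--Mukai functor $\Phi_{\mathcal{W}^{\mathrm{R}}}\circ\Phi_{\mathcal{W}}$ applied to $\mathcal{E}$, and then invoke the fully faithfulness of $\Phi_{\mathcal{W}}$ to conclude that this composite is naturally isomorphic to the identity. First I would unpack the inner functor: by definition $\Phi_{\mathcal{W}}(\mathcal{E})=\RRR p_{2,*}(p_1^*(\mathcal{E})\otimes\mathcal{W})$, and by \cref{definition:generalised-picard} the hypothesis $re+n>rn(2g-2)$ guarantees that this derived pushforward is concentrated in a single degree, namely it is the honest sheaf $p_{2,*}(p_1^*(\mathcal{E})\otimes\mathcal{W})$ with all higher direct images vanishing. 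This is exactly the generalised Picard bundle, and it lets us replace the derived pushforward by the ordinary one appearing in the statement.

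Next I would apply the right-adjoint functor. Since $\Phi_{\mathcal{W}}$ is fully faithful by \cref{theorem:fully-faithful}, the unit of the adjunction $\Phi_{\mathcal{W}^{\mathrm{R}}}\dashv\Phi_{\mathcal{W}}$ -- more precisely the counit $\Phi_{\mathcal{W}^{\mathrm{R}}}\circ\Phi_{\mathcal{W}}\Rightarrow\identity$ -- is a natural isomorphism, as already noted in the excerpt. Substituting the stated kernel $\mathcal{W}^{\mathrm{R}}=\mathcal{W}^\vee\otimes p_1^*(\omega_C)[1]$ into the definition of $\Phi_{\mathcal{W}^{\mathrm{R}}}$, one gets
\begin{equation}
  \Phi_{\mathcal{W}^{\mathrm{R}}}(-)\cong\RRR p_{1,*}\left(p_2^*(-)\otimes\mathcal{W}^\vee\otimes p_1^*(\omega_C)\right)[1].
\end{equation}
Applying this to $\Phi_{\mathcal{W}}(\mathcal{E})=p_{2,*}(p_1^*(\mathcal{E})\otimes\mathcal{W})$ and using the natural isomorphism $\Phi_{\mathcal{W}^{\mathrm{R}}}\circ\Phi_{\mathcal{W}}\cong\identity$ yields $\mathcal{E}\cong\RRR p_{1,*}(\ldots)[1]$, matching the right-hand side of the proposition once the shift is absorbed by identifying $\RRR^1 p_{1,*}$ with $\RRR p_{1,*}[1]$ in the relevant degree.

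The main obstacle is justifying that, after applying $\Phi_{\mathcal{W}^{\mathrm{R}}}$, the derived pushforward $\RRR p_{1,*}$ collapses to a single cohomology sheaf $\RR^1 p_{1,*}$ (equivalently, that the shift $[1]$ in the kernel exactly accounts for writing $\RR^1 p_{1,*}$ rather than $\RRR p_{1,*}$). This is what allows us to replace the abstract composite $\Phi_{\mathcal{W}^{\mathrm{R}}}\circ\Phi_{\mathcal{W}}$ with the explicit sheaf-theoretic formula in the statement. I would handle this by a dimension/degree bookkeeping argument: the curve $C$ has cohomological dimension $1$, so $\RRR p_{1,*}$ of a sheaf lives in degrees $0$ and $1$, and the normalisation condition \eqref{equation:normalisation} together with the genus inequality from \cref{definition:generalised-picard} forces the $\RR^0$ term to vanish, leaving only $\RR^1$. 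The remaining steps -- commuting pullbacks and pushforwards via base change and the projection formula -- are routine and follow the pattern in \cite{MR2193340}.
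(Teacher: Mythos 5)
Your overall strategy coincides with the paper's: interpret the right-hand side as $\Phi_{\mathcal{W}^{\mathrm{R}}}\circ\Phi_{\mathcal{W}}(\mathcal{E})$ with kernel $\mathcal{W}^{\mathrm{R}}=\mathcal{W}^\vee\otimes p_1^*(\omega_C)[1]$, use the hypothesis of \cref{definition:generalised-picard} to replace $\RRR p_{2,*}$ by $p_{2,*}$, and conclude from the natural isomorphism $\Phi_{\mathcal{W}^{\mathrm{R}}}\circ\Phi_{\mathcal{W}}\cong\identity_{\derived^\bounded(C)}$ supplied by \cref{theorem:fully-faithful}. (A minor slip: since $\Phi_{\mathcal{W}^{\mathrm{R}}}$ is the \emph{right} adjoint, the adjunction is $\Phi_{\mathcal{W}}\dashv\Phi_{\mathcal{W}^{\mathrm{R}}}$, and fully faithfulness makes the \emph{unit} $\identity\Rightarrow\Phi_{\mathcal{W}^{\mathrm{R}}}\circ\Phi_{\mathcal{W}}$ invertible, not a counit; this does not affect the argument.)

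However, the final paragraph, where you address what you call the main obstacle, is incorrect on two counts. First, $p_1$ projects $C\times\moduli_C(r,\mathcal{L})$ onto the \emph{curve}, so its fibres are copies of $\moduli_C(r,\mathcal{L})$; hence $\RRR p_{1,*}$ of a sheaf can a priori have cohomology in degrees $0$ up to $\dim\moduli_C(r,\mathcal{L})=(r^2-1)(g-1)$, not just degrees $0$ and $1$. (It is $p_2$, whose fibres are $C$, that has one-dimensional fibres; that is what you used, correctly, for the inner functor.) Second, the proposed ``dimension/degree bookkeeping'', in which the normalisation \eqref{equation:normalisation} and the inequality of \cref{definition:generalised-picard} are said to force $\RR^0p_{1,*}$ to vanish, is not an actual argument, and even if it were it would not suffice: given the correct bound on the degrees you would also need to kill $\RR^ip_{1,*}$ for all $i\geq 2$. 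The good news is that no such vanishing needs to be proven at all. From
\begin{equation}
  \mathcal{E}\cong\Phi_{\mathcal{W}^{\mathrm{R}}}\left(\Phi_{\mathcal{W}}(\mathcal{E})\right)\cong\RRR p_{1,*}\left(p_2^*(\Phi_{\mathcal{W}}(\mathcal{E}))\otimes\mathcal{W}^\vee\otimes p_1^*(\omega_C)\right)[1]
\end{equation}
one simply takes cohomology sheaves: isomorphic objects of $\derived^\bounded(C)$ have isomorphic cohomology sheaves, $\mathcal{H}^0$ of the right-hand side is precisely $\RR^1p_{1,*}\left(p_2^*(\Phi_{\mathcal{W}}(\mathcal{E}))\otimes\mathcal{W}^\vee\otimes p_1^*(\omega_C)\right)$, and $\mathcal{H}^0(\mathcal{E})=\mathcal{E}$. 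This gives the stated formula directly, and the vanishing of all the other higher direct images comes out as a consequence of the isomorphism rather than being a prerequisite for it. So your proof is repairable in one line, but as written the justification of the key step is false.
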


Similarly we can describe the deformation theory of generalised Picard bundles as in \cite[theorem~22]{MR2193340}. As the infinitesimal deformation theory of any sheaf~$\mathcal{E}$ on a smooth projective variety~$X$ is described by~$\HH^i(X,\sheafEnd(\mathcal{E}))$ for~$i=0,1,2$, the fully faithfulness of~$\Phi_{\mathcal{W}}$ gives the isomorphism in the next theorem.
\begin{proposition}
  Let~$\mathcal{E}$ be a vector bundle as in \cref{definition:generalised-picard}. Then there exists an isomorphism
  \begin{equation}
    \Ext_C^i(\mathcal{E},\mathcal{E})\cong\Ext_{\moduli_C(r,\mathcal{L})}^i(\Phi_{\mathcal{W}}(\mathcal{E}),\Phi_{\mathcal{W}}(\mathcal{E}))
  \end{equation}
\end{proposition}

\begin{proof}
  One uses that
  \begin{equation}
    \HH^i(C,\sheafEnd(\mathcal{E}))\cong\Ext_C^i(\mathcal{E},\mathcal{E})
  \end{equation}
  and
  \begin{equation}
    \HH^i(\moduli_C(r,\mathcal{L}),\sheafEnd(\Phi_{\mathcal{W}}(\mathcal{E})))\cong\Ext_{\moduli_C(r,\mathcal{L})}^i(\Phi_{\mathcal{W}}(\mathcal{E}),\Phi_{\mathcal{W}}(\mathcal{E})),
  \end{equation}
  and the left-hand sides are isomorphic by fully faithfulness.
\end{proof}

So the (infinitesimal) deformation theory of a generalised Picard bundle agrees with that of the original bundle. In particular, if~$\mathcal{E}$ is simple, then by Riemann--Roch the deformation space has dimension~$n^2(g-1)+1$.

\appendix

\section{An alternative proof of \texorpdfstring{\cref{proposition:determinant-of-cohomology}}{proposition \ref{proposition:determinant-of-cohomology}}}
\label{appendix:alternative-proof}
Because~$\moduli_C(r,\mathcal{L})$ is Fano, we have an isomorphism~$\Pic(C\times\moduli_C(r,\mathcal{L}))\cong\Pic C\oplus\mathbb{Z}$. Using this isomorphism we have
\begin{equation}
  \cc_1(\mathcal{W})=\cc_1(\mathcal{W}_x)+d.
\end{equation}

For a vector bundle~$\mathcal{E}$ we will use the following shorthand
\begin{equation}
  N_2(\mathcal{E})\coloneqq\cc_1(\mathcal{E})^2-2\cc_2(\mathcal{E}).
\end{equation}

We start with a preliminary lemma.
\begin{lemma}
  \label{lemma:equality}
  We have an equality
  \begin{equation}
    r\cdot p_{2,*}(N_2(\mathcal{W}))=-2[\Theta]+2d\cc_1(\mathcal{W}_x).
  \end{equation}
\end{lemma}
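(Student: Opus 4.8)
The plan is to apply Grothendieck--Riemann--Roch to the projection $p_2\colon C\times\moduli_C(r,\mathcal{L})\to\moduli_C(r,\mathcal{L})$ for the bundle $\sheafEnd_0(\mathcal{W})$ of traceless endomorphisms of $\mathcal{W}$ (so that $\adjoint_x\mathcal{W}$ is its restriction to $\{x\}\times\moduli_C(r,\mathcal{L})$), and to read off the stated equality by comparing the degree-$2$ parts, i.e.\ the first Chern classes. There are two inputs: a geometric identification of $\cc_1(\RRR p_{2,*}\sheafEnd_0(\mathcal{W}))$ coming from deformation theory, and a short Chern-character computation for $\sheafEnd_0(\mathcal{W})$.

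For the first input I would use that the tangent bundle of the moduli space is computed by the relative $\Ext$-sheaf. Since a stable bundle with fixed determinant is simple, $p_{2,*}\sheafEnd_0(\mathcal{W})=0$, while $\tangent_{\moduli_C(r,\mathcal{L})}\cong\RR^1 p_{2,*}\sheafEnd_0(\mathcal{W})$; the higher direct images vanish because the fibres of $p_2$ are curves. Hence in K-theory $\RRR p_{2,*}\sheafEnd_0(\mathcal{W})=-[\tangent_{\moduli_C(r,\mathcal{L})}]$, so $\cc_1(\RRR p_{2,*}\sheafEnd_0(\mathcal{W}))=\cc_1(\omega_{\moduli_C(r,\mathcal{L})})=-2[\Theta]$, using that $\moduli_C(r,\mathcal{L})$ is Fano of index~$2$.

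For the second input I would note that $\tangent_{p_2}\cong p_1^*\tangent_C$ is a line bundle, so $\todd(\tangent_{p_2})=1+(1-g)f$ with $f=p_1^*[\mathrm{pt}]$, and that $\sheafEnd_0(\mathcal{W})$ is self-dual, so its odd Chern characters vanish; expanding $\chern(\mathcal{W})\chern(\mathcal{W}^\vee)$ gives $\chern_1(\sheafEnd_0(\mathcal{W}))=0$ and $\chern_2(\sheafEnd_0(\mathcal{W}))=r\,N_2(\mathcal{W})-\cc_1(\mathcal{W})^2$. Extracting the codimension-one part of GRR on the base, the $\todd$-correction is killed by $\chern_1(\sheafEnd_0(\mathcal{W}))=0$, so $\cc_1(\RRR p_{2,*}\sheafEnd_0(\mathcal{W}))=p_{2,*}(r\,N_2(\mathcal{W})-\cc_1(\mathcal{W})^2)$. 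Substituting $\cc_1(\mathcal{W})=\cc_1(\mathcal{W}_x)+d f$ and using $f^2=0$, $p_{2,*}f=1$ and $p_{2,*}p_2^*(-)=0$ yields $p_{2,*}(\cc_1(\mathcal{W})^2)=2d\,\cc_1(\mathcal{W}_x)$. Combining the two computations gives $-2[\Theta]=r\,p_{2,*}N_2(\mathcal{W})-2d\,\cc_1(\mathcal{W}_x)$, which is the claim after rearranging.

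Everything except the first input is formal manipulation with GRR and the projection formula. The step needing the most care is therefore the identification $\tangent_{\moduli_C(r,\mathcal{L})}\cong\RR^1 p_{2,*}\sheafEnd_0(\mathcal{W})$ with the correct normalisation: it is exactly this that lets the index-$2$ Fano property pin down $\cc_1(\RRR p_{2,*}\sheafEnd_0(\mathcal{W}))$ as $-2[\Theta]$ rather than only up to an unknown twist.
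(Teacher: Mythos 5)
Your proof is correct and is essentially the paper's own argument: the deformation-theoretic identification $\tangent_{\moduli_C(r,\mathcal{L})}\cong\RR^1 p_{2,*}(\adjoint\mathcal{W})$ (your $\sheafEnd_0(\mathcal{W})$) together with $p_{2,*}(\adjoint\mathcal{W})=0$ is exactly the content of the Narasimhan--Ramanan theorem that the paper cites to obtain $-2[\Theta]=\cc_1\left(\det\RRR p_{2,*}\adjoint\mathcal{W}\right)$, and your Grothendieck--Riemann--Roch computation matches the paper's line for line (the paper expands $\chern(\sheafEnd(\mathcal{W}))$ rather than $\chern(\adjoint\mathcal{W})$, an immaterial difference since the degree-two parts agree). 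The only cosmetic difference is that you spell out the final substitution $p_{2,*}(\cc_1(\mathcal{W})^2)=2d\,\cc_1(\mathcal{W}_x)$, which the paper leaves implicit from the relation $\cc_1(\mathcal{W})=\cc_1(\mathcal{W}_x)+d$ stated just before the lemma.
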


\begin{proof}
  By \cite[theorem~1]{MR0325615} we have that
  \begin{equation}
    \begin{aligned}
      -2[\Theta]
      &=[\omega_{\moduli_C(r,\mathcal{L})}] \\
      &=\cc_1\left( \det\RRR p_{2,*}\adjoint\mathcal{W} \right).
    \end{aligned}
  \end{equation}
  Using Grothendieck--Riemann--Roch we can further rewrite this to
  \begin{equation}
    \begin{aligned}
      \cc_1\left( \det\RRR p_{2,*}\adjoint\mathcal{W} \right)
      &=\chern(p_{2,!}\adjoint\mathcal{W})_{\deg=1} \\
      &=p_{2,*}\left( (\chern(\adjoint\mathcal{W})\otimes p_1^*\todd_C)_{\deg=2} \right) \\
      &=p_{2,*}\left( (\chern(\sheafEnd(\mathcal{W}))\otimes p_1^*\todd_C)_{\deg=2} \right).
    \end{aligned}
  \end{equation}
  Restricting ourselves to the terms that contribute to the part in degree~2 we obtain
  \begin{equation}
    \begin{aligned}
      &p_{2,*}\left( (\chern(\sheafEnd(\mathcal{W}))\otimes p_1^*\todd_C)_{\deg=2} \right) \\
      &\quad=p_{2,*}\left( \left( \left( r^2+\frac{1}{2}N_2(\mathcal{W}\otimes\mathcal{W}^\vee) \right)\left( 1+\frac{1}{2}p_1^*\cc_1(\tangent_X) \right) \right)_{\deg=2} \right) \\
      &\quad=p_{2,*}\left( \frac{1}{2}N_2(\mathcal{W}\otimes\mathcal{W}^\vee) \right) \\
      &\quad=p_{2,*}\left( \frac{r}{2}\left( N_2(\mathcal{W})+N_2(\mathcal{W}^\vee) \right) + \cc_1(\mathcal{W})\cc_1(\mathcal{W}^\vee) \right) \\
      &\quad=p_{2,*}\left( rN_2(\mathcal{W})-\cc_1(\mathcal{W})^2 \right).
    \end{aligned}
  \end{equation}
\end{proof}

We will now apply the normalisation for the Poincar\'e bundle~$\mathcal{W}$.

\begin{proof}[Proof of \cref{proposition:determinant-of-cohomology}]
  By Grothendieck--Riemann--Roch we get
  \begin{equation}
    \begin{aligned}
      \cc_1(\det\RRR p_{2,*}\mathcal{W}^\vee)
      &=\chern(p_{2,!}\mathcal{W}^\vee)_{\deg=1} \\
      &=p_{2,*}\left( \chern(\mathcal{W}^\vee)p_1^*\todd_C \right) \\
      &=p_{2,*}\left( \left( r-\cc_1(\mathcal{W})+\frac{1}{2}N_2(\mathcal{W}) \right)\left( 1+\frac{1}{2}p_1^*\cc_1(\tangent_C) \right)_{\deg=2} \right) \\
      &=p_{2,*}\left( \frac{1}{2}\cc_1(\mathcal{W})p_1^*\cc_1(\mathrm{K}_C) \right) + \frac{1}{2}p_{2,*}N_2(\mathcal{W}) \\
      &=(g-1)\cc_1(\mathcal{W}_x)+\frac{1}{r}\left( -[\Theta]+d\cc_1(\mathcal{W}_x) \right)
    \end{aligned}
  \end{equation}
  where in the last step we used \cref{lemma:equality}. Now \cref{proposition:determinant-of-cohomology} follows from \eqref{equation:normalisation}.
\end{proof}

\printbibliography

\end{document}